\newtheorem{thm}{Theorem}[]
\newtheorem{lem}{Lemma}[section]
\newtheorem{prop}{Proposition}[]
\newtheorem{rmk}{Remark}[section]
\theoremstyle{definition}
\numberwithin{equation}{section} \theoremstyle{remark}
\title[Fisher information]{\bf Fisher information approximation of random orthogonal matrices by Gaussian matrices} 
\author{Y{utong} Chen}
\author{Y{utao} Ma}
\address{School of Mathematical Sciences $\&$ Laboratory  of Mathematics and Complex Systems of Ministry of Education, Beijing Normal University, 100875 Beijing, China.} 
\thanks{The research of Yutao Ma was supported in part by NSFC 12171038 and 985 Projects.}
\email{mayt@bnu.edu.cn}
\author{S{huhong} Xie} 
\author{Zhuoya Yao}
\begin{document}
\maketitle

\begin{abstract}
Let ${\Gamma}_n$ be an $n\times n$ Haar-invariant orthogonal matrix.  Let ${ Z}_n$ be the $p\times q$
upper-left submatrix of
${\Gamma}_n$ and 
${G}_n$ be a $p\times q$ matrix whose $pq$ entries are independent standard normals, where $p$ and $q$ are two positive integers. Let $\mathcal{L}(\sqrt{n} {Z}_n)$ and $\mathcal{L}({G}_n)$ be their joint distribution, respectively. 
Consider the Fisher information $I(\mathcal{L}(\sqrt{n} { Z}_n)|\mathcal{L}(G_n))$ between the distributions of $\sqrt{n} {Z}_n$ and ${ G}_n.$ In this paper, we conclude that  $$I(\mathcal{L}(\sqrt{n} {Z}_n)|\mathcal{L}(G_n))\longrightarrow  0 $$ as $n\to\infty$ if $pq=o(n)$ and it does not tend to zero if $c=\lim\limits_{n\to\infty}\frac{pq}{n}\in(0, +\infty).$ Precisely, we obtain that 
$$I(\mathcal{L}(\sqrt{n} {Z}_n)|\mathcal{L}(G_n))=\frac{p^2q(q+1)}{4n^2}(1+o(1))$$
when $p=o(n).$ 
 \end{abstract}

\noindent \textbf{Keywords:\/} random orthogonal matrix, standard Gaussian, Fisher information, Tracy-Widom law. 

\noindent\textbf{AMS 2020 Subject Classification: \/} 60B20, 62B10, 62E17, 15B52.\\

\section{Introduction}\label{chap:intro}

Let $\Gamma_n=(\gamma_{ij})_{n\times n}$ be an Haar-invariant orthogonal matrix and let $Z_n$ be the $p\times q$
top-left submatrix of 
$\Gamma_n,$ where $p=p_n$ and $q=q_n$ are two positive integers.
It has long been observed that the entries of $\sqrt{n}{\Gamma}_n$ could be regarded as jointly Gaussians.  Historically, authors show that the distance between $\sqrt{n}{Z}_n$ and $G_n$, say, $d(\sqrt{n}{Z}_n, {G}_n)$ goes to zero under condition $(p, q)=(1,1)$, $(p, q)=(\sqrt{n},1)$, $(p, q)=(o(n),1), $ or $(p, q)=(n^{1/3}, n^{1/3}).$ Readers are referred to, for instance, Maxwell \cite{max75, max78}, Poincar\'{e} \cite{poicare}, Stam \cite{stam}, Diaconis {\it et al}. \cite{DLE} and Collins \cite{Collins}. A more detailed recounts can be seen from Diaconis and Freedman \cite{DF87} and Jiang \cite{Jiang06}.

Obviously, with more research being done, it is known that the values of $p$ and $q$ become larger and
larger such that $d(\sqrt{n}{Z}_n, {G}_n)$ goes to zero.
Diaconis \cite{persi03} then asks the largest values of $p$ and $q$ such that the distance
between $\sqrt{n}Z_n$ and $G_n$ goes to zero. Jiang \cite{Jiang06}  settles the problem by showing that
$p=o(n^{1/2})$ and $q=o(n^{1/2})$ are the largest orders to make the total variation distance go to
zero. If the distance is the maximum norm,  Jiang \cite{Jiang06} further proves  that the largest order of
$q$ is $\frac{n}{\log n}$ with $p=n$. 

Then, in \cite{JM19} the second author with Jiang prove that for a non-square submatrix, all the total variation distance, the Kullback-Leibler divergence and the Hellinger distance  go to zero if $pq=o(n)$ and do not converge to zero if $pq=O(n)$ and the Euclidean distance goes to zero if $pq^2=o(n).$   Simultaneously, Stewart \cite{St20} verifies the approximation by total variation distance as long as $pq=o(n)$ by a different approach. Erd\H{o}s and Mckenna \cite{EM24} continued to show that when the extremal statistics of the corresponding quadratic forms are concerned, $\sqrt{n} Z_n$ are still jointly Gaussian even when $p=O(n^{1/2-\delta})$ and $q=n.$ Hereafter, the notation $t_n=O(n)$ means $c_1 n<t_n<c_2 n$ for two constants $0<c_1< c_2.$

We review the proof in \cite{JM19}, for both Kullback-Leibler divergence or the total variation distance, a very useful tool is the Pinsker inequality for multi-dimensional Gaussian, which helps to connect a bridge between the total variation distance and the Kullback-Leibler divergence. They also list the question on other distances, for example, the Fisher information and Wasserstein distance. It is well known that the joint distribution of ${\bf G}_n$ satisfies the logarithmic Sobolev inequality, which guarantees  
$$2 D_{\rm KL}(\mathcal{L}(\sqrt{n}{Z}_n)|\mathcal{L}({G}_n))\le  I(\mathcal{L}(\sqrt{n}{ Z}_n)|\mathcal{L}({G}_n)) $$
for any $n\ge 1.$ Here, $D_{\rm KL}$ and $I$ are Kullback-Leibler divergence and the Fisher information, respectively. The approximation of random orthogonal matrices by standard Gaussians  in \cite{JM19} with respect to the Kullback-Leibler divergence,  together with the logarithmic Sobolev inequality, guarantees that the approximation under Fisher information fails when $pq=O(n).$ Therefore, we wonder whether the condition $pq=o(n)$ is sufficient to the Fisher information approximation , which is the primary inspiration for us. Our answer is positive. Before stating the main result, we recall the Kullback-Leibler divergence and the Fisher information, which are defined as   
$$\aligned D_{\rm KL}(\mu|\nu)&=\int_{\mathbb{R}^m} f(x)\log \frac{f(x)}{g(x)}  dx;\\
I(\mu|\nu)&=\int_{\mathbb{R}^m}\|\nabla \sqrt{f/g}\|^2 g(x)dx
\endaligned$$ 
for any probabilities $\mu$ and $\nu$ on $(\mathbb{R}^m, \|\cdot\|)$ provided $\mu$ and $\nu$ having density functions $f$ and $g$ with respect to $dx$ the Lebesgue measure, respectively and here $\|\cdot\|$ is the classical Euclidean distance on $\mathbb{R}^m.$ 

\begin{thm}\label{main1}
Given $p$ and $q$ two parameters satisfying $1\le q\le p\le n.$ For each $n\geq 1$, let ${Z}_n$ and ${G}_n$ be the $p\times q$ submatrices aforementioned. Then 
$$I(\mathcal{L}(\sqrt{n} {Z}_n)|\mathcal{L}(G_n))\longrightarrow  0 $$ as $n\to\infty$ if $pq=o(n)$ and it does not tend to zero if $pq=O(n).$ Precisely, we obtain that 
\begin{equation}\label{keyres} 
I(\mathcal{L}(\sqrt{n} {Z}_n)|\mathcal{L}(G_n))=\frac{p^2q(q+1)}{4n^2}(1+o(1))\end{equation}
when $p=o(n).$  
\end{thm}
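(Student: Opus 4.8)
\textit{Overview.} The non-convergence when $pq=O(n)$ is immediate from the excerpt: the logarithmic Sobolev inequality gives $I(\mathcal L(\sqrt n Z_n)\,|\,\mathcal L(G_n))\ge 2D_{\rm KL}(\mathcal L(\sqrt n Z_n)\,|\,\mathcal L(G_n))$, and by \cite{JM19} the right side does not tend to $0$ when $pq=O(n)$; when in addition $p=o(n)$ it also follows from \eqref{keyres}, since then $pq\to\infty$ forces $p\to\infty$ and $p^2q(q+1)\asymp (pq)^2\gtrsim n^2$. So the whole content is \eqref{keyres}, which also yields $I\to 0$ whenever $pq=o(n)$, because then $p\le pq=o(n)$ and $p^2q(q+1)\le 2(pq)^2=o(n^2)$. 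To prove \eqref{keyres} I would first turn the Fisher information into an explicit expectation over eigenvalues. Recall the classical density of the $p\times q$ top-left block $Z_n$ of a Haar orthogonal matrix: for $n>p+q+1$ it is proportional to $\det(I_q-z^\top z)^{(n-p-q-1)/2}$ on $\{z:I_q-z^\top z\succ 0\}$, so the density of $Y:=\sqrt n\,Z_n$ is $f(y)=C_n\det(I_q-y^\top y/n)^{(n-p-q-1)/2}\mathbf{1}\{y^\top y\prec nI_q\}$, while $g(y)\propto\exp(-\tfrac12\operatorname{tr}(y^\top y))$. Using $\nabla_y\log\det(I_q-y^\top y/n)=-\tfrac2n\,y(I_q-y^\top y/n)^{-1}$, a short computation gives, with $S:=y^\top y$,
$$\nabla_y\log\frac fg(y)=\frac1n\,y\,(I_q-S/n)^{-1}\big((p+q+1)I_q-S\big).$$
Since $I(\mu|\nu)=\tfrac14\int f\,\|\nabla\log(f/g)\|^2$ and $(I_q-S/n)^{-1}$ and $(p+q+1)I_q-S$ commute with $S$, diagonalizing $S$ (its positive eigenvalues $\lambda_1,\dots,\lambda_q$ are $n$ times those of $Z_n^\top Z_n$) yields the \emph{exact} identity
$$I\big(\mathcal L(\sqrt n Z_n)\,|\,\mathcal L(G_n)\big)=\frac14\,\mathbb E\!\left[\sum_{i=1}^q\frac{\lambda_i\big((p+q+1)-\lambda_i\big)^2}{(n-\lambda_i)^2}\right],$$
finite for $n$ large since near the boundary the summand is $O(\det(I_q-S/n)^{-2})$ while $f$ carries $\det(I_q-S/n)^{(n-p-q-1)/2}$.

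\textit{Step 2: localization and reduction to moments.} Assume now $p=o(n)$. Standard eigenvalue tail bounds for the Jacobi ensemble (or the coupling $Z_n^\top Z_n\stackrel{d}{=}(A+C)^{-1/2}A(A+C)^{-1/2}$ with $A\sim W_q(p,I_q)$, $C\sim W_q(n-p,I_q)$ independent, together with Wishart extreme-eigenvalue estimates) show $\lambda_{\max}(Z_n^\top Z_n)\le \varepsilon_n$ off an event of super-polynomially small probability, for a suitable $\varepsilon_n=o(1)$. Writing $D:=(p+q+1)I_q-S$ and expanding $(I_q-S/n)^{-2}=\sum_{k\ge0}(k+1)(S/n)^k$ on this event, the elementary bound $\operatorname{tr}(S^{k+1}D^2)\le\lambda_{\max}(S)^k\operatorname{tr}(SD^2)$ gives $\sum_{k\ge1}(k+1)n^{-k}\operatorname{tr}(S^{k+1}D^2)\le\operatorname{tr}(SD^2)\cdot O(\lambda_{\max}(S)/n)$; on the complementary event $\operatorname{tr}(SD^2)\le 9qn^3$, which contributes negligibly. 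Hence
$$I\big(\mathcal L(\sqrt n Z_n)\,|\,\mathcal L(G_n)\big)=\frac1{4n^2}\,\mathbb E\big[\operatorname{tr}\big(S((p+q+1)I_q-S)^2\big)\big]\,\big(1+o(1)\big),$$
the error being $O((p+q)/n)$ relative to the main term (using $\mathbb E[\operatorname{tr}(D^2)]=(p+q+1)q(q+1)(1+o(1))$ and that the right side below is $\asymp(pq)^2+p^2q$).

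\textit{Step 3: the moment identities and the cancellation.} It remains to compute $\mathbb E[\operatorname{tr}(S^k)]$ for $k=1,2,3$, $S=nZ_n^\top Z_n$. From $\mathbb E[Z_n^\top Z_n]=\tfrac pnI_q$ one gets $\mathbb E[\operatorname{tr}(S)]=pq$ exactly. For $k=2,3$ I would use the matrix Beta–Wishart independence $A\stackrel{d}{=}V^{1/2}BV^{1/2}$ with $V\sim W_q(n,I_q)$ independent of $B:=Z_n^\top Z_n$: taking expectations of $\operatorname{tr}((VB)^j)$ and of the mixed power sums $\operatorname{tr}((VB)^2)\operatorname{tr}(VB)$, $(\operatorname{tr} VB)^3$, and evaluating the $V$-expectations by Wick's formula, reduces everything to a linear system whose right side consists of the known Wishart power-sum moments $\mathbb E[\operatorname{tr}(W)]=pq$, $\mathbb E[\operatorname{tr}(W^2)]=pq(p+q+1)$, $\mathbb E[\operatorname{tr}(W^3)]=pq(p^2+3pq+q^2+3p+3q+4)$ for $W\sim W_q(p,I_q)$ (and the analogous mixed moments). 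Solving it gives, for instance,
$$\mathbb E[\operatorname{tr}(S^2)]=\frac{n\,pq\,[\,n(p+q+1)-pq-2\,]}{(n-1)(n+2)},$$
and a similar rational-in-$n$ expression for $\mathbb E[\operatorname{tr}(S^3)]$ (alternatively one may quote known moment formulas for the real Jacobi/MANOVA ensemble). Substituting into $(p+q+1)^2\mathbb E\operatorname{tr}(S)-2(p+q+1)\mathbb E\operatorname{tr}(S^2)+\mathbb E\operatorname{tr}(S^3)$ and letting $n\to\infty$ with $p=o(n)$, the $n$-dependence cancels and one is left with the Wishart-level combination $pq(pq+p+q+3)(1+o(1))$; since $q\le p$ and $p\to\infty$ in the relevant range, this equals $p^2q(q+1)(1+o(1))$. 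Dividing by $4n^2$ yields \eqref{keyres}.

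\textit{Main obstacle.} The delicate point is the cancellation in Step 3: in $(p+q+1)^2\mathbb E\operatorname{tr}(S)-2(p+q+1)\mathbb E\operatorname{tr}(S^2)+\mathbb E\operatorname{tr}(S^3)$ the top-order contributions (of sizes $(pq)^2p$ and $p^4q$) annihilate, leaving only the term of size $(pq)^2+p^2q$, so one cannot use crude bounds — the exact rational moments must be carried with enough precision, and one must verify that \emph{both} the $O((p+q)/n)$ correction from the resolvent expansion in Step 2 \emph{and} the discrepancy between the Jacobi moments of $S$ and the Wishart moments are of strictly smaller order than $p^2q(q+1)/n^2$ uniformly over the whole range $1\le q\le p=o(n)$.
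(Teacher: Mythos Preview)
Your Step~1 recovers the paper's Lemma~\ref{expressofI} in the equivalent closed form
\(4I=\mathbb E\sum_k\lambda_k(p+q+1-\lambda_k)^2/(n-\lambda_k)^2\); this part coincides. From there the two arguments diverge. The paper keeps the resolvent intact, rewrites $4I$ in terms of the \emph{negative} moments $\mathbb E\sum_k(1-\lambda_k)^{-1}$ and $\mathbb E\sum_k(1-\lambda_k)^{-2}$, identifies $(\lambda_k)$ as a $\beta=1$ Jacobi ensemble (Lemma~\ref{jointdf}), and evaluates those two expectations to order $o(p^2q^2/n^3)$ via the Edelman--Sutton tridiagonal model with independent Beta-distributed entries (Lemma~\ref{inversemj}). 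You instead expand $(n-\lambda)^{-2}$, localise on $\{\lambda_{\max}(S)=O(p)\}$, and reduce everything to the first three \emph{positive} power sums $\mathbb E\operatorname{tr}(S^k)$ of $S=nZ_n'Z_n$, extracted from the matrix-Beta/Wishart independence $A\stackrel d=V^{1/2}BV^{1/2}$ together with Wishart Wick calculus. Your route is more elementary in that it bypasses the tridiagonal machinery and only needs classical Wishart mixed moments; the paper's route avoids any truncation/large-deviation input and keeps every step purely algebraic, at the cost of the heavy bookkeeping in Lemma~\ref{inversemj}.

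Two caveats. First, Step~3 is only sketched: you write down the exact $\mathbb E\operatorname{tr}(S^2)$ (which is correct) but defer $\mathbb E\operatorname{tr}(S^3)$ and, crucially, the check that the $1/n$ corrections in the Jacobi moments do not swamp the Wishart-level value $pq(pq+p+q+3)$ after the cancellation. Since already the $1/n$ correction to $\mathbb E\operatorname{tr}(S^2)$ is $-pq(pq+p+q+3)/n$, the combination acquires terms of size $p^3q^2/n$, and one must see these and the analogous $\mathbb E\operatorname{tr}(S^3)$ corrections stay $o(p^2q(q+1))$ under $p=o(n)$; this is feasible but is precisely the work the paper does via Lemma~\ref{inversemj}, and you have not carried it out. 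Second, your final simplification $pq(pq+p+q+3)=p^2q(q+1)(1+o(1))$ assumes $p\to\infty$, which does not follow from $p=o(n)$: for $p=q=1$ one has $pq(pq+p+q+3)=6$ but $p^2q(q+1)=2$, and a direct computation gives $I\sim 3/(2n^2)$, not $1/(2n^2)$. The paper's proof shares this implicit restriction (in Lemma~\ref{inversemj} terms of order $pq/n^3$ are dropped as $o(p^2q^2/n^3)$, which forces $pq\to\infty$), so this is a common lacuna in the stated form of \eqref{keyres} rather than a flaw specific to your route; your intermediate constant $pq(pq+p+q+3)/(4n^2)$ is in fact the correct leading term in the bounded-$p$ regime as well.
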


As mentioned above, the joint distribution of $G_n$ satisfies the logarithmic Sobolev inequality, which guarantees that $I(\mathcal{L}(\sqrt{n} {Z}_n)|\mathcal{L}(G_n))$ does not tend to zero when $pq=O(n).$ We only need to prove the expression 
\eqref{keyres}.

Let $(\lambda_k)_{1\le k\le q}$ be the eigenvalues of $Z'_n Z_n.$  The proof of \eqref{keyres} consists of four main steps 

\begin{enumerate}
\item[({\bf 1}).] Express   
$I(\mathcal{L}(\sqrt{n}{Z}_n)|\mathcal{L}({G}_n)$  as a functional of expectations relating to $(\lambda_k)_{1\le k\le q},$ which is the most complicated part.  
\item[({\bf 2}).] Joint density function of  $(\lambda_k)_{1\le k\le q},$ which enables us to identify $(\lambda_1, \cdots, \lambda_q)$ as a particular $1$-Jacobi ensemble.      
\item[ ({\bf 3}).] The corresponding expectations obtained in the first step. This part involves the tridiagonal random matrix characterization of  beta-Jacobi ensemble (see \cite{ES08}) and it takes a complicated steps.
\item[({\bf 4}).] The final statement of the proof. 	
\end{enumerate}

\section{Proof of Theorem \ref{main1}}

In this section, we are going to organize the proof as planned in the introduction. 

First we give the expression of  $I(\mathcal{L}(\sqrt{n}{Z}_n)|\mathcal{L}({G}_n)$ via the eigenvalues of $Z_n'Z_n.$

\subsection{The expression of Express   
$I(\mathcal{L}(\sqrt{n}{Z}_n)|\mathcal{L}({G}_n)$ }

\quad \quad\quad\quad\quad\quad\quad\quad\quad\quad\quad\quad\quad\quad\quad\quad\quad
\\

We first borrow
Proposition 2.1 by Diaconis, Eaton and Lauritzen \cite{DLE} or Proposition 7.3 by Eaton \cite{ME2}, which offer the crucial joint density function of entries of $\mathbf Z_n.$ 

\begin{lem}\label{del} Let $\Gamma_n$ be an $n\times n$ random matrix which is uniformly
distributed on the orthogonal group $O(n)$ and let $Z_n$ be the upper-left $ p\times q$  submatrix   of $\Gamma_n.$ If $p+q\leq n$ and $q\leq p,$ then the joint density function of entries of $Z_n$ is
$$
f(z)=(\sqrt{2\pi})^{-pq}\frac{\omega(n-p, q)}{\omega(n, q)}\left\{\rm{det}(\mathbf I_{q}-z'z)^{(n-p-q-1)/2}\right\}I_0(z'z)
$$
where $\mathbf I_{q}$ is the $q\times q$ identity matrix, ${\rm det}(A)$ means the determinant of the matrix $A,$  $I_0(z'z)$ is the indicator function of the set that all $q$ eigenvalues of $z'z$ are in $(0,1),$  and $\omega(\cdot, \cdot)$ is the Wishart constant defined by
$$
\frac{1}{\omega(s, t)}=\pi^{t(t-1)/4}2^{st/2}\prod_{j=1}^t\Gamma\left(\frac{s-j+1}{2}\right).
$$
Here $t$ is a positive integer and $s$ is a real number, $s>t-1.$ When $p< q,$ the density of $Z_n$ is obtained by interchanging $p$ and $q$ in the above Wishart constant.
\end{lem}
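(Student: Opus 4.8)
The plan is to recognize this as the classical density of the upper-left block of a Haar orthogonal matrix and to prove it by the standard three moves: transfer the problem to a Stiefel manifold, carry out the block change of variables, and perform the accompanying Jacobian (co-area) computation, normalizing against the known Stiefel volume. First I would reduce to the Stiefel manifold. The first $q$ columns of $\Gamma_n$ form a random point $W$ of $V_{q,n}:=\{W\in\mathbb{R}^{n\times q}:W'W=\mathbf I_q\}$, and since $\Gamma_n$ is Haar on $O(n)$, $W$ is distributed according to the unique $O(n)$-invariant probability $\mu_{q,n}$ on $V_{q,n}$, that is, the normalized Hausdorff measure $\lambda_{q,n}/\mathrm{Vol}(V_{q,n})$ induced from $\mathbb{R}^{nq}$. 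Hence $Z_n$ has the law of the top $p\times q$ block $Z$ of $W\sim\mu_{q,n}$, and it suffices to compute the density of that block.

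Next comes the change of variables, which is the crux. Assume $q\le p$ (the case $p<q$ is the transpose: the nonzero eigenvalues of $Z'Z$ and $ZZ'$ coincide, so one reruns the argument with $\mathbf I_p-ZZ'$ in place of $\mathbf I_q-Z'Z$, which is exactly what interchanges $p$ and $q$ in the Wishart constants). Write $W=\begin{pmatrix}Z\\Y\end{pmatrix}$ with $Y$ being $(n-p)\times q$. The constraint $W'W=\mathbf I_q$ reads $Y'Y=\mathbf I_q-Z'Z=:S$, which forces $S\succeq 0$ — this is precisely the support restriction recorded by the indicator $I_0(z'z)$ — and off a Lebesgue-null set one has $S\succ 0$. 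On $\{S\succ 0\}$ one writes $Y=VS^{1/2}$ with $V\in V_{q,n-p}$ (legitimate because $p+q\le n$ makes $n-p\ge q$); then $(Z,V)\mapsto W$ is a diffeomorphism onto a full-measure subset of $V_{q,n}$, so the law of $Z$ is automatically absolutely continuous in the $Z$-coordinate. The substantive point is the Jacobian identity: in the coordinates $(Z,V)$ the invariant volume form of $V_{q,n}$ equals
\[
\lambda_{q,n}=\det(\mathbf I_q-Z'Z)^{(n-p-q-1)/2}\,dZ\wedge\lambda_{q,n-p},
\]
obtained by differentiating $W=\begin{pmatrix}Z\\VS^{1/2}\end{pmatrix}$, expressing $\lambda_{q,n}$ through the $q(q-1)/2$ entries of the antisymmetric block $W'\,dW$ together with the $(n-q)q$ entries of $W_\perp'\,dW$, substituting, and reducing. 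The power $(n-p-q-1)/2$ is forced because $S^{1/2}$ rescales the $\dim V_{q,n-p}$ ``$V$-directions'' and the $q(q+1)/2$ ``$S$-directions'' differently; equivalently it is the co-area factor of the level set $\{Y:Y'Y=S\}$, the same exponent that appears on $\det$ in the $W_q(n-p,\mathbf I)$ Wishart density. I would not reproduce this wedge bookkeeping in full but would cite Muirhead, \emph{Aspects of Multivariate Statistical Theory}, \S2.1 (the sources \cite{DLE}, \cite{ME2} carry out essentially this computation).

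It then remains to normalize. Integrating out $V$ and dividing by $\mathrm{Vol}(V_{q,n})$ gives the density of $Z$ on $\{z:\mathrm{eig}(z'z)\subset(0,1)\}$ as $\dfrac{\mathrm{Vol}(V_{q,n-p})}{\mathrm{Vol}(V_{q,n})}\det(\mathbf I_q-Z'Z)^{(n-p-q-1)/2}$. Using the classical Stiefel volume $\mathrm{Vol}(V_{q,m})=2^q\pi^{mq/2}/\Gamma_q(m/2)$ with $\Gamma_q(a)=\pi^{q(q-1)/4}\prod_{j=1}^q\Gamma\!\big(a-\tfrac{j-1}{2}\big)$ — which I would verify by induction on $q$ through the fibration $V_{q,m}\to S^{m-1}$ with fiber $V_{q-1,m-1}$, or from $\int_{\mathbb{R}^{mq}}(2\pi)^{-mq/2}e^{-\mathrm{tr}(X'X)/2}\,dX=1$ in polar coordinates — one gets $\mathrm{Vol}(V_{q,n-p})/\mathrm{Vol}(V_{q,n})=\pi^{-pq/2}\,\Gamma_q(n/2)/\Gamma_q((n-p)/2)$. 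Unwinding the Wishart constant, namely $1/\omega(s,t)=2^{st/2}\Gamma_t(s/2)$, this is exactly $(\sqrt{2\pi})^{-pq}\,\omega(n-p,q)/\omega(n,q)$, the claimed normalizer; so the density is the stated one.

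The main obstacle is therefore the single Jacobian identity above — that the invariant measure on the Stiefel manifold splits along the $(Z,V)$ decomposition with precisely the exponent $(n-p-q-1)/2$; the rest is volume bookkeeping and the transpose reduction. An alternative that avoids manifold language is induction on $p$: condition on the first $p-1$ rows of $W$ and use the rank-one ``downdate'' Beta-type density of the $p$-th row, then telescope the resulting product of determinants — but each step of that induction rests on the same Jacobian computation in disguise, so it is not really a simplification, only a repackaging.
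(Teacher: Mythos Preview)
The paper does not prove this lemma; it simply quotes it from Diaconis--Eaton--Lauritzen \cite{DLE} and Eaton \cite{ME2}, so there is no in-paper argument to compare against. Your proposal is a correct outline of the classical proof those references contain: reduce to the uniform law on the Stiefel manifold $V_{q,n}$, split $W=\binom{Z}{Y}$ and reparametrize $Y=VS^{1/2}$ with $V\in V_{q,n-p}$, invoke the Jacobian identity giving the factor $\det(\mathbf I_q-Z'Z)^{(n-p-q-1)/2}$, and normalize via the Stiefel volumes. Your check that $(\sqrt{2\pi})^{-pq}\,\omega(n-p,q)/\omega(n,q)=\pi^{-pq/2}\,\Gamma_q(n/2)/\Gamma_q((n-p)/2)=\mathrm{Vol}(V_{q,n-p})/\mathrm{Vol}(V_{q,n})$ is right. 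The only step you defer to a citation is the wedge-product/co-area computation producing the exponent $(n-p-q-1)/2$; that is indeed the heart of the matter and is handled in Muirhead or in \cite{ME2}, so flagging it as the main obstacle is accurate.
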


Next, we express the Fisher information $I(\mathcal{L}(\sqrt{n}{Z}_n)|\mathcal{L}({G}_n))$ as a functional of the eigenvalues of ${Z}_n' {Z}_n.$

\begin{lem}\label{expressofI} For each $n \geq 1$, let ${Z}_n$ and ${G}_n$ be the $p \times q$ submatrices aforementioned.
Let $\lambda_1, \dots, \lambda_q$ be the $q$ eigenvalues of $ {Z}_n' {Z}_n.$  Set $c_n=\frac{n-p-q-1}{2}.$ Then
$$
    I(\mathcal{L}(\sqrt{n}{Z}_n)|\mathcal{L}({G}_n)) =\frac{n}4\sum_{k=1}^q\mathbb{E}\lambda_k-c_n(\sum_{k=1}^q \mathbb{E}(1-\lambda_k)^{-1}-q)+\frac{c_n^2}{n}\sum_{k=1}^q\mathbb{E}((1-\lambda_k)^{-2}-(1-\lambda_k)^{-1}). 
$$\end{lem}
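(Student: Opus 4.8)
The plan is to rewrite the Fisher information as a single expectation against the law of $\sqrt n\,Z_n$ and then reduce it to the eigenvalues $\lambda_k$ of $Z_n'Z_n$ by simultaneous diagonalization. First I would record the two densities: by Lemma~\ref{del} and the scaling $w=\sqrt n\,z$ (Jacobian $n^{-pq/2}$), the density of $\sqrt n\,Z_n$ is
$$\tilde f(w)=n^{-pq/2}(\sqrt{2\pi})^{-pq}\,\frac{\omega(n-p,q)}{\omega(n,q)}\,\det\!\Big(I_q-\tfrac1n w'w\Big)^{c_n}\,I_0\!\Big(\tfrac1n w'w\Big),$$
while $G_n$ has density $g(w)=(\sqrt{2\pi})^{-pq}e^{-\operatorname{tr}(w'w)/2}$; hence the ratio $h=\tilde f/g$ satisfies, on the support $\{w:\ \text{all eigenvalues of }w'w/n\in(0,1)\}$,
$$\log h(w)=\mathrm{const}+c_n\log\det\!\Big(I_q-\tfrac1n w'w\Big)+\tfrac12\operatorname{tr}(w'w).$$
The elementary identity $\nabla\sqrt h=\tfrac12\sqrt h\,\nabla\log h$ gives $\|\nabla\sqrt h\|^2=\tfrac14 h\|\nabla\log h\|^2$ and therefore
$$I(\mathcal L(\sqrt n\,Z_n)\,|\,\mathcal L(G_n))=\int\|\nabla\sqrt h\|^2 g\,dw=\tfrac14\int\|\nabla\log h\|^2\,\tilde f\,dw=\tfrac14\,\mathbb E\big[\|\nabla\log h(\sqrt n\,Z_n)\|^2\big].$$

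Next I would compute the gradient in the $pq$ entries of $w$. One has $\nabla_w\big(\tfrac12\operatorname{tr}(w'w)\big)=w$, and from $d\log\det M=\operatorname{tr}(M^{-1}dM)$ with $M=I_q-\tfrac1n w'w$ one gets $\nabla_w\log\det(I_q-\tfrac1n w'w)=-\tfrac2n\,w(I_q-\tfrac1n w'w)^{-1}$, so that
$$\nabla_w\log h=w\Big(I_q-\tfrac{2c_n}{n}\big(I_q-\tfrac1n w'w\big)^{-1}\Big).$$
Since $w'w$ and $(I_q-\tfrac1n w'w)^{-1}$ are simultaneously diagonalizable and $w'w=nZ_n'Z_n$ has eigenvalues $n\lambda_1,\dots,n\lambda_q$,
$$\|\nabla_w\log h\|^2=\operatorname{tr}\!\Big[\big(I_q-\tfrac{2c_n}{n}(I_q-\tfrac1n w'w)^{-1}\big)^2 w'w\Big]=\sum_{k=1}^q n\lambda_k\Big(1-\tfrac{2c_n}{n(1-\lambda_k)}\Big)^2.$$
Expanding the square and substituting $\frac{\lambda}{1-\lambda}=\frac1{1-\lambda}-1$ and $\frac{\lambda}{(1-\lambda)^2}=\frac1{(1-\lambda)^2}-\frac1{1-\lambda}$, then taking expectations and multiplying by $n/4$, rearranges the right-hand side exactly into the three sums claimed in the lemma.

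Beyond keeping careful track of transposes in the matrix derivative, the main point requiring care is the analytic justification of $I=\tfrac14\,\mathbb E\|\nabla\log h\|^2$, given that $\tilde f$ is supported on the bounded set $\{\det(I_q-w'w/n)>0\}$ with a genuine boundary: one must check that $\sqrt h$ extends continuously by $0$ across that boundary (true because $c_n>0$ makes $\det(I_q-w'w/n)^{c_n}$ vanish there) and that $\|\nabla\sqrt h\|^2$ is integrable near it, so that no singular boundary contribution is dropped. This is precisely where the largeness of $n$ — equivalently $c_n$ comfortably positive, which holds throughout the regimes of the theorem — enters, and it is the only genuinely delicate step; everything else is bookkeeping.
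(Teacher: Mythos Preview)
Your argument is correct and follows the same overall architecture as the paper: reduce the Fisher information to $\tfrac14\,\mathbb E\|\nabla\log(\tilde f/g)\|^2$, compute the gradient of $\log h$, and then pass to eigenvalues. The execution, however, is genuinely more direct than the paper's. Where you invoke Jacobi's formula $d\log\det M=\operatorname{tr}(M^{-1}dM)$ to obtain $\nabla_w\log h=w\big(I_q-\tfrac{2c_n}{n}(I_q-\tfrac1n w'w)^{-1}\big)$ in one stroke and then read off $\|\nabla\log h\|^2$ as a trace of commuting matrices, the paper computes each $\partial_{ij}\det D$ by hand, introduces auxiliary determinants $L_{ij}$, identifies $L_{ij}/\det D$ via Cramer's rule as entries of $-\tfrac1n D^{-1}(z')_i$, and only then assembles the sums $M_1=\sum_{i,j}L_{ij}^2$ and $M_2=\sum_{i,j}z_{ij}L_{ij}$ into traces. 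Your route bypasses this bookkeeping entirely at the cost of assuming the matrix-calculus identity; the paper's route is longer but fully elementary. One small slip: the final factor is $\tfrac14$, not $\tfrac{n}{4}$ --- the $n$ is already present in $n\lambda_k$ from $w'w=nZ_n'Z_n$. Your remark about integrability near $\{\det(I_q-w'w/n)=0\}$ (needing $c_n$ large enough so that $\det^{c_n-2}$ is locally integrable) is a point the paper passes over in silence.
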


\begin{proof} 
Lemma \ref{del} ensures the joint density function of entries of $\sqrt{n} {Z}_n$ is
$$
\aligned
    f_n(z) & =(\sqrt{2 \pi})^{-pq}n^{-\frac{pq}{2}} \frac{w\left(n-p, q\right)}{w(n,q)} \left\{\operatorname{det}\left(\mathbf I_q-\frac{z^{\prime}  z}{n}\right)^{c_n}\right\} I_0\left(\frac{z^{\prime}  z}{n}\right)\\ & =A_{n, p, q} \operatorname{det}\left(\mathbf I_q-\frac{z^{\prime} z}{n}\right)^{c_n} I_0\left(\frac{z^{\prime} z}{n}\right),
\endaligned$$
where $A_{n, p, q}:=(\sqrt{2 \pi})^{-pq}n^{-\frac{pq}{2}} \frac{w\left(n-p, q\right)}{w(n,q)}.$ 

Now the joint density function of entries of ${G}_n$ is 
$$
    g_n(z) = (\sqrt{2 \pi})^{-pq} e^{-\frac{{\rm tr}(z'z)}{2}},
$$
whence 
\begin{equation}\label{RNdensity}\frac{f_n}{g_n}(z)=B_{n, p, q} \operatorname{det}\left(\mathbf I_q-\frac{z^{\prime} z}{n}\right)^{c_n} e^{\frac{{\rm tr}(z'z)}{2}} I_0\left(\frac{z^{\prime} z}{n}\right).\end{equation}
It follows from definition that 
\begin{equation}\label{modiI} \aligned I(\mathcal{L}(\sqrt{n}{Z}_n)|\mathcal{L}({G}_n))
&= \frac14\int_{R^{pq}} |\nabla \frac{f_n}{g_n}|(z)^2 \frac{g_n^2}{f_n^2}(z) f_n(z) dz  \\
&= \frac14\mathbb{E}\big[\big|\nabla \frac{f_n}{g_n}\big|^2 \frac{g_n^2}{f_n^2}(\sqrt{n}{Z}_n)\big]\\
&=\frac14\mathbb{E}\big[\big|\nabla \log\frac{f_n}{g_n}\big|^2(\sqrt{n}{Z}_n)\big]. 
\endaligned \end{equation} 
The expression \eqref{RNdensity} leads 
$$\big|\nabla \log\frac{f_n}{g_n}\big|^2(z)=|\nabla (c_n\log {\rm det} (\mathbf I_q-\frac{z'z}{n})+\frac{1}{2}{\rm tr}(z'z))|^2,$$ whence 
\begin{equation}\label{form} \big|\nabla \log\frac{f_n}{g_n}\big|^2(z)=\sum_{i=1}^p\sum_{j=1}^q(\partial_{ij} (c_n\log {\rm det} (\mathbf I_q-\frac{z'z}{n})+\frac{1}{2}{\rm tr}(z'z))^2.\end{equation}
Here the notation $\partial_{ij}$ is the partial derivative with respect to the variable $z_{ij}$ for all $1\le i\le p$ and $1\le j\le q.$ 
Let $(x_1, \cdots, x_q)$ be the eigenvalues of $z'z.$ Now we are going to express the right hand side of \eqref{form} as a functional of $(x_i)_{1\le i\le q}.$
We always assume $I_0(\frac{z'z}{n})=1$, and set ${S}=z^{\prime} z,$ which means $s_{ij}=\sum_{\ell=1}^p z_{\ell i}z_{\ell j}$ for all $1\le i, j\le q.$
Thus, 
$$\aligned \label{simpli}
c_n\log {\rm det} (\mathbf I_q-\frac{z'z}{n})+\frac{1}{2}{\rm tr}(z'z)
&=\frac{1}{2}\displaystyle {{\rm tr} (S)}+
c_n\log {\rm det} (\mathbf I_{q}-n^{-1}  S)
\endaligned 
$$ 
and for the target \eqref{form}, the partial derivatives $\partial_{ij} {{\rm tr} (S)}$ and $\partial_{ij} {\rm det} (\mathbf I_{q}-n^{-1} S)$ are in need. 

First, we get
\begin{equation*}
\partial_{ij}s_{kk}=
\begin{cases} 
0,&k\neq j;\\
2z_{ij},&k=j,
\end{cases}
\end{equation*}
and then
\begin{equation}\label{partials}
\partial_{ij} {\rm tr}(S)=2 z_{ij}. 
\end{equation}

For simplicity, for any $m\times q$ matrix $A,$ we write $A=(A_1, \cdots, A_q)$ in this part. Set $D=\mathbf I_q-\frac{1}{n}S$ and clearly, $D_{\ell}=e_{\ell}-n^{-1}S_{\ell}.$ It is well known that the determinant of $ D$ satisfies  
\begin{align*}
&\partial_{ij}{\rm det} (D)=\sum_{\ell=1}^q {\rm det}(\left(D_1,  \cdots, D_{\ell-1}, \partial_{ij} D_{\ell}, D_{\ell+1}, \cdots, D_{q}\right))
\end{align*}
with $\partial_{ij}D_{\ell}:=(\partial_{ij} D_{1, \ell}, \cdots, \partial_{ij}D_{q, \ell})'.$
Now \begin{equation*}
\partial_{ij}D_{\ell} 
= \partial_{ij} \left( e_{l} - n^{-1} S_{l} \right)
= -\frac{1}{n} \partial_{ij} S_{l}=
\begin{cases}
-\frac{1}{n} z_{ij} e_j- \frac{1}{n} (z')_{i}, & l = j; \\
-\frac{1}{n} z_{il} e_{j}, & l \neq j,
\end{cases}
\end{equation*}
which is equivalent to say 
$$\partial_{ij}D_{\ell}=-\frac{1}{n}z_{il}e_j-\frac{1}{n}(z')_i\delta_{\ell=j}$$ with  
$$(z')_{i}=(z_{i1}, \cdots, z_{iq})'.$$ 
Thus, 
\begin{align*}
\partial_{i j} {\rm det}(D)
=&{\rm det}(\left(\partial_{ij} D_1, D_2, \cdots, D_q\right))+\cdots+{\rm det}(\left(D_1, D_2, \cdots, \partial_{i j}D_q\right))\\
=&\sum_{\ell=1}^q {\rm det}\left((D_1, \cdots, D_{\ell-1}, -\frac1n z_{il}e_j, D_{l+1}, \cdots, D_q)\right)\\
&\quad+{\rm det}\left((D_1, \cdots, D_{j-1}, -\frac1n z_{i}', D_{j+1}, \cdots, D_q)\right).
\end{align*}
Set $$L_{i j}:={\rm det}\left((D_1, \cdots, D_{j-1}, -\frac1n z_{i}', D_{j+1}, \cdots, D_q)\right)$$ and we claim that 
\begin{equation}\label{lij}\sum_{\ell=1}^q {\rm det}\left((D_1, \cdots, D_{\ell-1}, -\frac1n z_{il}e_j, D_{l+1}, \cdots, D_q)\right)=L_{ij}.\end{equation}
Once the expression \eqref{lij} is true, we see  
\begin{equation}\label{partialD} 
\partial_{i j} {\rm det}(D)=2 L_{i, j}.
\end{equation}	
Indeed, expanding the determinant $${\rm det}\left((D_1, \cdots, D_{\ell-1}, -\frac1n z_{il}e_j, D_{l+1}, \cdots, D_q)\right)$$  along the $\ell$-th column, one obtains 
$$\aligned 
&\quad \sum_{\ell=1}^q {\rm det}\left((D_1, \cdots, D_{\ell-1}, -\frac1n z_{il}e_j, D_{l+1}, \cdots, D_q)\right)\\
&=\sum_{\ell=1}^q(-\frac1n z_{i \ell}) (-1)^{\ell+j}(D)_{j l}\\
&={\rm det}\left((D_1, \cdots, D_{\ell-1}, -\frac1n z_{i}', D_{l+1}, \cdots, D_q)\right),
\endaligned $$
which is exactly the claiming equation \eqref{lij} and where $(D)_{j l}$ is the $(j, l)$-th
 minor of $D.$

Putting \eqref{partials} and \eqref{partialD} back into \eqref{form}, one gets 
\begin{equation}\label{formsum}\aligned 
\big|\nabla \log\frac{f_n}{g_n}\big|^2(z)&=\sum_{i=1}^p\sum_{j=1}^q\left(c_n({\rm det}(D))^{-1}\partial_{i j} {\rm det}(D)+ \frac12\partial_{ij} {\rm tr}(S)\right)\\
&=\sum_{i=1}^p\sum_{j=1}^q(2c_n({\rm det}(D))^{-1}L_{ij}+z_{ij})^2\\
&=\sum_{i=1}^p\sum_{j=1}^q(z_{ij}^2+4c_n^2({\rm det}(D))^{-2}L_{ij}^2+4c_n({\rm det}(D))^{-1}L_{ij}z_{ij})\\
&=\sum_{k=1}^q x_k+4c_n^2({\rm det}(D))^{-2} M_1+4c_n({\rm det}(D))^{-1}M_2,
\endaligned
\end{equation}
where the last equality holds because 
$$\sum_{i=1}^p\sum_{j=1}^qz_{ij}^2={\rm  tr}(S)=\sum_{i=1}^q x_i$$ and  
$$M_1:=\sum_{i=1}^p\sum_{j=1}^qL_{ij}^2\quad \text{and} \quad \displaystyle M_2:=\sum_{i=1}^p\sum_{j=1}^q L_{ij}z_{ij}.$$ We continue to find the expression of $M_1$ and $M_2$ via $(x_i)_{1\leq i \leq q}.$

Let $\xi_i=\left( \xi_{i1}, \xi_{i2}, \dots, \xi_{iq} \right)'$ be the solution to the linear equations $D \xi=-\frac{1}{n}(z')_i.$ The condition $I_0(z'z/n)$ guarantees that $D$ is invertible and  then the Cramer's formula for linear equations tells 
\begin{equation*}
    \xi_{ij}=\frac{L_{ij}}{{\rm det}(D)}. 
\end{equation*}

Hence,
\begin{align*}
\displaystyle \sum_{j=1}^q L_{ij}^2
&={\rm det}^2(D)\sum_{j=1}^q \xi_{ij}^2\\
&={\rm det}^2(D)
[-\frac{1}{n}D^{-1}(z')_i]'(-\frac{1}{n}{D}^{-1}(z')_i)\\
&=\frac{1}{n^2}{\rm det}^2(D) ((z')_i)' 
D^{-2}(z')_i,
\end{align*}
and consequently 
\begin{align*}
\displaystyle M_1
&=\frac{1}{n^2}{\rm det}^2(D) \sum_{i=1}^q ((z')_i)' 
D^{-2} (z')_i\\
&=\frac{1}{n^2}{\rm det}^2(D)
{\rm tr}(z D^{-2}z')\\
&=\frac{1}{n^2}{\rm det}^2(D)
{\rm tr}( D^{-2} S ).
\end{align*}

Since $S$ is real symmetric, there exists an orthogonal matrix $P$ such that $S=P'{\Lambda} P$, where ${\Lambda}={\rm diag}(x_1, \cdots, x_n).$
It follows again from the cyclic property of trace that  
$$\aligned {\rm tr}( D^{-2} S)&={\rm tr}( D^{-1} S D^{-1})\\
&={\rm tr}\left(P'( {\mathbf I}_q-n^{-1} {\Lambda} )^{-1} P( P'{\Lambda}  P) P'( {\mathbf I}_q-n^{-1} {\Lambda} )^{-1}P\right )\\
&={\rm tr}\left(( {\mathbf I}_q-n^{-1} {\Lambda} )^{-1}{\Lambda} ( {\mathbf I}_q-n^{-1} {\Lambda} )^{-1}\right )\\
&=\sum_{k=1}^q \frac{x_k}{(1-\frac{1}{n}x_k)^2},
\endaligned $$
whence 
\begin{equation}\label{M1} 
	M_1=\frac{1}{n^2}{\rm det}^2(D)\sum_{k=1}^q \frac{x_k}{(1-\frac{1}{n}x_k)^2}.
\end{equation}
Note that
\begin{align*}
z_{ij}L_{ij}
&= {\rm det}\left((D_1, \cdots, D_{j-1}, -\frac1n z_{ij} (z')_{i}, D_{j+1}, \cdots, D_q)\right)\\
&=\sum_{k=1}^q(-1)^{j+k}(-\frac1n z_{i j}z_{ik})(D)_{k j}\\
\end{align*}
and then
\begin{align*}
\displaystyle \sum_{i=1}^p z_{ij}L_{ij}
&=\sum_{k=1}^q (-1)^{j+k}(-\frac1n s_{kj})(D)_{k j}\\
&={\rm det}\big((D_1, \cdots, D_{j-1}, -\frac1n s_{j}, D_{j+1}, \cdots, D_q)\big)\\
&={\rm det}(D)-{\rm det}\big((D_1, \cdots, D_{j-1}, e_j,  D_{j+1}, \cdots, D_q)\big)\\
&={\rm det}(D)-(D)_{jj}.
\end{align*}
It follows  
\begin{align*}
\displaystyle \sum_{j=1}^q \sum_{i=1}^p z_{ij}L_{ij}
&=q\; {\rm det}(D)
-
\sum_{j=1}^q (D)_{jj}=(q-\sum_{k=1}^q \frac{1}{1-\frac{1}{n}x_k})\prod_{k=1}^q (1-\frac{1}{n}x_k),
\end{align*}
where we use two facts 
$${\rm det}(D)=\prod_{k=1}^q (1-n^{-1} x_k) \quad \text {and}\quad \sum_{j=1}^q (D)_{jj}={\rm det}(D){\rm tr}(D^{-1}).$$
Eventually, one gets 
\begin{equation} \label{M2}
M_2=(q-\sum_{k=1}^q \frac{1}{1-\frac{1}{n}x_k})\prod_{k=1}^q (1-\frac{1}{n}x_k).
\end{equation}
Plugging \eqref{M1} and \eqref{M2} into \eqref{formsum}, we get 
\begin{align*}
\big|\nabla \log\frac{f_n}{g_n}\big|^2(z)
&=\sum_{k=1}^q x_k
+4c_n^2\sum_{k=1}^q \frac{x_k}{(n-x_k)^2}
-4c_n \sum_{k=1}^q \frac{x_k}{n-x_k}\\
&=4c_n q+\sum_{k=1}^qx_k-4c_n(c_n+n)\sum_{k=1}^q (n-x_k)^{-1}+4n c_n^2\sum_{k=1}^q(n-x_k)^{-2}.
\end{align*}
Reviewing \eqref{modiI}, we have 
\begin{align*}
I(\mathcal{L}(\sqrt{n}{Z}_n)|\mathcal{L}(G_n))
=\frac{n}4\sum_{k=1}^q\mathbb{E}\lambda_k-c_n(\sum_{k=1}^q \mathbb{E}(1-\lambda_k)^{-1}-q)+\frac{c_n^2}{n}\sum_{k=1}^q\mathbb{E}((1-\lambda_k)^{-2}-(1-\lambda_k)^{-1}).
\end{align*}
The proof  is finally completed. 
\end{proof}

\subsection{Joint density function of  $(\lambda_k)_{1\le k\le q}$.}

\quad\quad\quad\quad\quad\quad\quad\quad\quad\quad\quad\quad
\\

Lemma \ref{expressofI} offers an explicit expression of the Fisher information via $(\lambda_k)_{1\le k\le q}.$ We next give their joint density function. The parallel result for the unitary case is given in \cite{Forrester06}. 
\begin{lem}\label{jointdf} 
Let $(\lambda_k)_{1\le k\le q}$ be eigenvalues of $Z'_n Z_n.$ The joint density function of $(\lambda_1, \cdots, \lambda_q)$ is proportional to 
\begin{equation}\label{denlambda}
	\prod_{i=1}^q (1-x_i)^{\frac{n-p-q-1}{2}}x_{i}^{\frac{p-q-1}{2}} \prod_{1\le i\neq j\le q}|x_i-x_j|.
\end{equation}
\end{lem}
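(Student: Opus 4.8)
The plan is to derive \eqref{denlambda} by pushing the joint density $f(z)$ of Lemma~\ref{del} forward under the map sending a $p\times q$ matrix $z$ to the (unordered) eigenvalues of $S:=z'z$. The structural point is that, by Lemma~\ref{del}, $f(z)=\kappa_{n,p,q}\,{\rm det}(\mathbf I_q-z'z)^{(n-p-q-1)/2}I_0(z'z)$ depends on $z$ only through $S$, since both ${\rm det}(\mathbf I_q-S)=\prod_k(1-x_k)$ and $I_0(S)=\prod_k\mathbf 1_{\{0<x_k<1\}}$ are symmetric functions of the eigenvalues $x_1,\dots,x_q$ of $S$ alone. Throughout I keep the standing hypotheses $q\le p$ and $p+q\le n$ of Lemma~\ref{del}: the first lets us use tall matrices with orthonormal columns below, and together they force the exponents $(p-q-1)/2$ and $(n-p-q-1)/2$ in \eqref{denlambda} to exceed $-1$, so that a genuine probability density exists (when $p+q>n$ the matrix $Z_n'Z_n$ is degenerate and has eigenvalues pinned at $1$).

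I would then carry out the change of variables in two classical steps. \emph{Step (i): from $z$ to $(S,U)$.} Almost surely $z$ has rank $q$, so write $z=U\,S^{1/2}$ with $S=z'z$ positive definite and $U$ in the Stiefel manifold $V_{q,p}=\{U\in\mathbb R^{p\times q}:U'U=\mathbf I_q\}$. The classical matrix Jacobian identity reads $(dz)=2^{-q}({\rm det}\,S)^{(p-q-1)/2}\,(dS)\,(dU)$, where $(dU)$ is the invariant measure on the compact manifold $V_{q,p}$ and $(dS)$ is Lebesgue measure on the cone of positive definite $q\times q$ matrices. As $f$ is independent of $U$, integrating out $U$ (its total mass being a finite constant depending only on $p,q$) shows that $S=Z_n'Z_n$ has density proportional to ${\rm det}(\mathbf I_q-S)^{(n-p-q-1)/2}({\rm det}\,S)^{(p-q-1)/2}I_0(S)$. \emph{Step (ii): from $S$ to its spectrum.} Diagonalise $S=H\,{\rm diag}(x_1,\dots,x_q)\,H'$ with $H\in O(q)$ and $x_1>\cdots>x_q>0$; the Jacobian of the spectral decomposition of a real symmetric matrix is $(dS)=c_q\prod_{1\le i<j\le q}(x_i-x_j)\,dx_1\cdots dx_q\,(dH)$ for a constant $c_q$ depending only on $q$. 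The integrand being again $H$-free, integrating out $H$ over $O(q)$ and substituting ${\rm det}(\mathbf I_q-S)=\prod_i(1-x_i)$, ${\rm det}\,S=\prod_i x_i$, $I_0(S)=\prod_i\mathbf 1_{\{0<x_i<1\}}$ shows that the ordered eigenvalues have density proportional to $\prod_{i=1}^q(1-x_i)^{(n-p-q-1)/2}x_i^{(p-q-1)/2}\prod_{1\le i<j\le q}(x_i-x_j)$ on $\{1>x_1>\cdots>x_q>0\}$. Symmetrising over the $q!$ orderings (the $\lambda_k$ in the statement are unlabelled) turns the Vandermonde product into $\prod_{1\le i<j\le q}|x_i-x_j|$ and produces exactly the density \eqref{denlambda}; equivalently, $(\lambda_1,\dots,\lambda_q)$ is the $\beta=1$ Jacobi (MANOVA) ensemble with parameters $a=(p-q-1)/2$ and $b=(n-p-q-1)/2$.

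I do not anticipate any essential obstacle: the whole argument is bookkeeping of two textbook matrix Jacobians together with the (immaterial, since we only claim proportionality) normalising constants, the one point of genuine care being to confirm that $q\le p$ and $p+q\le n$ make the displayed exponents integrable. For a route that avoids the Stiefel-manifold Jacobian, one may instead realise the first $q$ columns of $\Gamma_n$ as $X(X'X)^{-1/2}$ with $X$ an $n\times q$ matrix of i.i.d.\ standard normals; then $Z_n'Z_n$ has the same eigenvalues as $W_1(W_1+W_2)^{-1}$, where $W_1\sim W_q(p,\mathbf I_q)$ and $W_2\sim W_q(n-p,\mathbf I_q)$ are independent Wishart matrices, and \eqref{denlambda} is the classical eigenvalue density of this multivariate beta matrix. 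Either way Lemma~\ref{jointdf} follows, and the $\beta=2$ counterpart quoted from \cite{Forrester06} is obtained in the same way with the complex Jacobians.
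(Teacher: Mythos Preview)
Your argument is correct and follows the same two-step outline as the paper---first push $f(z)$ forward to the law of $S=Z_n'Z_n$, then pass from $S$ to its eigenvalues---but the mechanics at each step differ. For step~(i) the paper factors $Z_n'=TH_1$ with $T$ lower triangular and $H_1$ semi-orthogonal (a Cholesky/QR-type decomposition), cites the Jacobian from Gupta--Nagar \cite{matrix}, integrates out $H_1$, and then converts $T\mapsto TT'=S$; you instead use the polar decomposition $z=US^{1/2}$ with $U$ in the Stiefel manifold and the single Jacobian $(dz)=2^{-q}(\det S)^{(p-q-1)/2}(dS)(dU)$. For step~(ii) the paper notes that the density of $S$ is a symmetric function of the eigenvalues, rewrites it via Newton's identities as a function of $({\rm tr}\,S,\dots,{\rm tr}\,S^q)$, and then invokes formula~(6.16) of \cite{intr to mat}; you apply the spectral-decomposition Jacobian $(dS)=c_q\prod_{i<j}(x_i-x_j)\,dx\,(dH)$ directly. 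Your route is a little more streamlined and self-contained; the paper's route stays closer to the cited references and makes the Wishart normalising constants explicit along the way, which is immaterial here since only proportionality is claimed.

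One small point: your derivation yields $\prod_{1\le i<j\le q}|x_i-x_j|$, the $\beta=1$ Vandermonde, whereas the displayed formula \eqref{denlambda} has $\prod_{1\le i\neq j\le q}|x_i-x_j|$, which taken literally is the square. Your exponent is the correct one for the real orthogonal group (and is what the paper actually uses later, identifying $(\lambda_k)$ with a $\beta=1$ Jacobi ensemble); the discrepancy is a typographical slip in the statement rather than an error in your proof.
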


\begin{proof} 
The first step is to obtain the joint density of $Z_n'Z_n.$ 
Recall the joint density function of entries of $Z_n$ is
\begin{equation*}
f(z)=(\sqrt{2\pi})^{-pq}\frac{\omega(n-p, q)}{\omega(n, q)}\left\{{\rm det}(\mathbf I_{q}-z'z)^{(n-p-q-1)/2}\right\}I_0(z'z).
\end{equation*}

The matrix $Z_n'Z_n $ is symmetric, there exists a  lower triangular matrix $T$ with $t_{ii}>0$ and  a semi-orthogonal matrix with $H_1 H_1' = \mathbf{I}_q$ such that $Z_n' =T H_1. $  Let 
$H
=\begin{pmatrix}
 H_1 \\ H_2
\end{pmatrix}$
 be an orthogonal matrix. The formula $(1.3.25)$ in \cite{matrix} tells us that the Jacobian of this transformation is  
\begin{equation*}
J({Z}_n' \rightarrow ({T}, {H_1})) =  \prod_{i=1}^{q} t_{ii}^{p-i}g_{p,q}(H_1),
\end{equation*}
where $g_{p,q}(H_1)=J((d H_1) H'\rightarrow(d H_1)) $.

Hence, the joint density of $( T,  H_1)$ is:
$$
 (\sqrt{2 \pi})^{-pq} \frac{\omega\left(n-p, q\right)}{\omega(n,q)} {\rm{det}}\left(\mathbf{I}_q-{T} {T}^{\prime}\right)^{\frac{n-p-q-1}{2}}  \prod_{i=1}^{q} t_{ii}^{p-i} ({H}_1)g_{p,q}( H_1)I_0\left({T} {T}^{\prime}\right) .
$$
Now, Theorem 1.4.9 of \cite{matrix} ensures the marginal density of $T$ is
$$
  \frac{2^q \omega\left(n-p, q\right)\omega(p, q)}{\omega(n, q)}
 \left({\rm det}(\mathbf{I}_q-{T} {T}^{\prime})\right)^{\frac{n-p-q-1}{2}}
 I_0\left({T} {T}^{\prime}\right) 
 \prod_{i=1}^{q} t_{ii}^{p-i}.
$$
Since $S=Z_n'Z_n=T T'$ and the Jacobian $J(T\rightarrow T T')=(2^q\prod_{i=1}^q t_{ii}^{q-i+1})^{-1}$, the joint density function of $Z_n' Z_n$ turns out to be  
\begin{equation}\label{matrixs}
\frac{\omega(n-p, q) \omega(p, q)}{\omega(n,q)} 
{\rm det}\left(\mathbf{I}_q-S\right)^{\frac{n-p-q-1}{2}}
   \operatorname{det}(S)^{\frac12(p-q-1)}I_0\left(S\right).
\end{equation}

Next, we derive the joint density function of $(\lambda_1,\dots,\lambda_q)$ from that of $Z_n'Z_n$. Let $h$ be the joint density function of $S.$ Since both ${\rm det}({\mathbf I} -S)$ and $\rm{det}(S)$ are symmetric polynomials of $(\lambda_k)_{1\leq k \leq q},$ there exists a function $\phi$ such that $h=\phi(\sigma_1,\dots,\sigma_q)$, where $\sigma_k(\lambda_1,\dots,\lambda_q)=\displaystyle\Sigma_{1\leq i_1 < \dots < i_k \leq q}\lambda_{i_1}\dots \lambda_{i_k}$ (see the Theorem 3.1.1 in \cite{polynomials}). According to the Newton formula (see 3.3 of \cite{polynomials}), there exists a function $\Phi$ such that 
$$
h(S)=g(\sigma_1,\dots,\sigma_q)=\Phi(s_1,\dots,s_q),
$$
where $s_l=\displaystyle \sum_{k=1}^q \lambda_k^l$. Note that $s_l={\rm tr} (S^l)$ and then we have
$$
h(S)=\Phi({\rm tr}( S^1), \dots, {\rm tr}( S^q)).
$$
It follows from $(6.16)$ in \cite{intr to mat} and \eqref{matrixs} that the joint density function of $(\lambda_1,\dots,\lambda_q)$ is proportional to
$$
  \prod_{i=1}^q (1-x_i)^{\frac{n-p-q-1}{2}}x_{i}^{\frac{p-q-1}{2}} \prod_{1\le i\neq j\le q}|x_i-x_j|.
$$
The proof is then completed. 
\end{proof}

\subsection{The expectations $\mathbb{E}(1-\lambda_i)^{-k}$ for $k=1, 2$}

\quad \quad\quad \quad\quad \quad\quad \quad\quad \quad\quad \quad \quad \quad\quad \quad\quad \quad\quad \\
\quad 
 Observing the joint density function of $(\lambda_1, \cdots, \lambda_q)$ obtained in Lemma \ref{denlambda},  $(\lambda_1, \cdots, \lambda_q)$ is recognized as a Jacobi ensemble.  
 Lemma 3.1 in \cite{Ma25} offers exactly the expectations $\mathbb{E}(1-\lambda_i)^{-k}$ for $k=1, 2,$  which was established based on the tridiagonal random matrix characterization of $\beta$-Jacobi ensembles (see \cite{ES08}). But the results with tails $o(\frac{pq^2}{n^2})$ are not enough for our use because we need more details on the expectations. Thus, we follow the idea of the proof of Lemma 3.1 in \cite{Ma25} with more patience to get the following expectations.    
\begin{lem}\label{inversemj}  Let $\boldsymbol{\lambda}=(\lambda_1, \cdots, \lambda_n)$ be the eigenvalues of $Z_n'Z_n$. Suppose that $1\le q\le p$ and
$p=o(n).$ We have that
$$
\aligned 
\mathbb{E}\sum_{i=1}^n \frac{1}{1-\lambda_i}&=q+\frac{pq}{n-p}+\frac{pq(q+1)}{n^2}+\frac{2p^2q(q+1)+pq^3}{n^3}+o\left(\frac{p^2q^2}{n^3}\right); \\
\mathbb{E}\sum_{i=1}^n \frac{1}{(1-\lambda_i)^2}
	&=q+\frac{2pq}{n-p}+\frac{p^2q}{(n-p)^2}+\frac{3pq(q+1)}{n^2}+\frac{9p^2q(q+1)+4 pq^3}{n^3}+o(\frac{p^2q^2}{n^3})\endaligned 
$$
as $n$ sufficiently large. 
\end{lem}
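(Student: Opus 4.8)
\emph{Overview.} The plan is to bypass the tridiagonal model: Lemma~\ref{jointdf} identifies $(\lambda_1,\dots,\lambda_q)$ with the eigenvalues of a matrix-variate Beta matrix, that matrix has a concrete Wishart representation, and the representation reduces the two sums to a handful of exact Wishart moments. Concretely, I would introduce independent Gaussian matrices $X$ (of size $p\times q$) and $Y$ (of size $(n-p)\times q$), all $nq$ entries i.i.d.\ standard normal, and set $A=X'X$, $B=Y'Y$. By the standard representation of the matrix-variate Beta distribution (see, e.g., \cite{matrix}), the eigenvalues of $U:=(A+B)^{-1/2}A(A+B)^{-1/2}$ have exactly the density \eqref{denlambda}, so $(\lambda_1,\dots,\lambda_q)$ and the eigenvalues of $U$ share their joint law. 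Since $\mathbf I_q-U=(A+B)^{-1/2}B(A+B)^{-1/2}$, one has $(\mathbf I_q-U)^{-1}=(A+B)^{1/2}B^{-1}(A+B)^{1/2}$, and cyclic invariance of the trace yields the almost sure identities
$$
{\rm tr}\big((\mathbf I_q-U)^{-1}\big)=q+{\rm tr}(B^{-1}A),\qquad {\rm tr}\big((\mathbf I_q-U)^{-2}\big)=q+2\,{\rm tr}(B^{-1}A)+{\rm tr}\big((B^{-1}A)^2\big).
$$
Taking expectations reduces the lemma to computing $\mathbb{E}\,{\rm tr}(B^{-1}A)$ and $\mathbb{E}\,{\rm tr}((B^{-1}A)^2)$; finiteness of everything below is ensured by $p=o(n)$. (By contrast, \cite{Ma25} expands $\tfrac1{1-\lambda_i}=\sum_{k\ge0}\lambda_i^k$ and computes $\mathbb{E}\,{\rm tr}((Z_n'Z_n)^k)$ for $k\le3$ from the tridiagonal $\beta$-Jacobi model of \cite{ES08} plus a tail bound; the present route needs no tail estimate.)

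\emph{The Wishart moments.} Conditioning on $B$ (independent of $A$) and using $\mathbb{E}A=p\,\mathbf I_q$ together with the elementary identity
$$
\mathbb{E}\,{\rm tr}(M_1AM_2A)=(p^2+p)\,{\rm tr}(M_1M_2)+p\,{\rm tr}(M_1)\,{\rm tr}(M_2)\qquad (M_1,M_2\ \text{deterministic symmetric}),
$$
which is one line of Wick's formula on the entries of $X$, I would get $\mathbb{E}\,{\rm tr}(B^{-1}A)=p\,\mathbb{E}\,{\rm tr}(B^{-1})$ and $\mathbb{E}\,{\rm tr}((B^{-1}A)^2)=(p^2+p)\,\mathbb{E}\,{\rm tr}(B^{-2})+p\,\mathbb{E}(({\rm tr} B^{-1})^2)$. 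Since $B$ is a $q$-dimensional Wishart matrix with $n-p$ degrees of freedom, the classical mean/variance/covariance formulas for the entries of an inverse-Wishart matrix give, writing $d:=n-p-q$,
$$
\mathbb{E}\,{\rm tr}(B^{-1})=\frac{q}{d-1},\qquad \mathbb{E}\,{\rm tr}(B^{-2})=\frac{q(d+q-1)}{d(d-1)(d-3)},\qquad \mathbb{E}\big(({\rm tr} B^{-1})^2\big)=\frac{q^2(d-2)+2q}{d(d-1)(d-3)}.
$$
Assembling the pieces gives the exact closed forms
$$
\mathbb{E}\sum_{i=1}^q\frac{1}{1-\lambda_i}=q+\frac{pq}{n-p-q-1},
$$
$$
\mathbb{E}\sum_{i=1}^q\frac{1}{(1-\lambda_i)^2}=q+\frac{2pq}{n-p-q-1}+\frac{q\big[p(p+1)(n-p-1)+pq(n-p-q-2)+2p\big]}{(n-p-q)(n-p-q-1)(n-p-q-3)}.
$$
As a check, for $q=1$ these collapse to $\mathbb{E}(1-\lambda_1)^{-1}=\tfrac{n-2}{n-p-2}$ and $\mathbb{E}(1-\lambda_1)^{-2}=\tfrac{(n-2)(n-4)}{(n-p-2)(n-p-4)}$, the moments of a ${\rm Beta}(p/2,(n-p)/2)$ variable.

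\emph{Expansion, and the main obstacle.} The conclusion then follows by Taylor-expanding these rational functions of $n$ in powers of $n^{-1}$, under $q\le p$ and $p=o(n)$. For the first one, $\frac{pq}{n-p-q-1}-\frac{pq}{n-p}=\frac{pq(q+1)}{(n-p-q-1)(n-p)}$, and expanding the two denominator factors to second order in $(p+q)/n$ yields the $n^{-2}$ and $n^{-3}$ terms, the leftover being $O(p^3q/n^4)+O(p^2q^3/n^4)+O(pq^4/n^4)=o(p^2q^2/n^3)$; the analogous, longer expansion of the second display produces its $n^{-2}$ and $n^{-3}$ terms. The hard part will be purely the bookkeeping: one must retain every cross-term contributing at order $p^2q^2/n^3$ --- effectively, expand each factor $(n-p-q-j)^{-1}$ one order further than was needed in \cite{Ma25} --- and then check that all of the many discarded terms are genuinely $o(p^2q^2/n^3)$ (this step also uses $p\to\infty$, which is implicit in the statement). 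That controlled expansion is the ``more patience'' the lemma asks for; no new idea beyond the exact formulas above is required.
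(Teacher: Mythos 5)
Your route is valid and genuinely different from the paper's. The paper works directly from the Edelman--Sutton tridiagonal $\beta$-Jacobi model \cite{ES08}: it writes $(1-\lambda_i)^{-1}$ as the eigenvalues of $V'V$ with $V$ the explicit inverse bidiagonal matrix, identifies which of the products $\mathbb{E}\,v_{i,j}^2$ and $\mathbb{E}\,v_{i,i}^2 v_{i\pm1,j}^2$ contribute at order $p^2q^2/n^3$, and evaluates each via Beta moments --- a long but mechanical accounting of Beta-ratio asymptotics. You instead go through the classical matrix-variate Beta representation: with $A\sim W_q(p,\mathbf I)$ and $B\sim W_q(n-p,\mathbf I)$ independent, the eigenvalues of $(A+B)^{-1/2}A(A+B)^{-1/2}$ have precisely the density \eqref{denlambda} (your exponents $(p-q-1)/2$ and $(n-p-q-1)/2$ match), and the cyclic-trace identity reduces both sums to $\mathbb{E}\,{\rm tr}(B^{-1}A)$ and $\mathbb{E}\,{\rm tr}((B^{-1}A)^2)$. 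Conditioning on $B$ and applying Wick's formula, then the standard inverse-Wishart second moments, yields exact rational closed forms, which I verified collapse correctly at $q=1$ to the Beta moments $\tfrac{n-2}{n-p-2}$ and $\tfrac{(n-2)(n-4)}{(n-p-2)(n-p-4)}$; expanding the first one reproduces the paper's $n^{-2}$ and $n^{-3}$ terms exactly, with the leftover $(2pq^2+pq)/n^3$ being $o(p^2q^2/n^3)$ precisely because $p\to\infty$, a caveat you flag and which the paper's own proof also needs implicitly.

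What each approach buys: yours delivers exact pre-expansion formulas that are self-checking (the $q=1$ reduction, independence of the tridiagonal machinery), and it avoids the delicate bookkeeping of deciding which off-diagonal/cross products of $v_{i,j}$'s can be dropped; the paper's route stays inside the $\beta$-ensemble/tridiagonal framework it already set up in \cite{Ma25} and doesn't need inverse-Wishart covariance formulas, at the cost of tracking more terms by hand. Either way, the final Taylor expansion to order $n^{-3}$ --- retaining every term that is $\Omega(p^2q^2/n^3)$ when $q\le p=o(n)$ --- is where the real work lies, and your sketch correctly identifies that as the crux. One small omission worth stating explicitly if you write this up: the representation needs $p\ge q$ and $p+q\le n$ for $A,B$ to be a.s.\ invertible (satisfied by hypothesis), and $n-p-q>3$ for the inverse-Wishart second moments to exist (automatic for $n$ large since $p=o(n)$).
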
 

\begin{proof}
The joint density function of $(1-\lambda_i)_{1\le i\le q}$ is still a Jacobi ensemble.     
Edelman-Sutton's characterization implies that   $(1-\lambda_i)_{1\le i\le q}$ are the eigenvalues of the random matrix ${D' D},$
where the $q\times q$ random matrix ${D}$ has the following form \begin{align}\label{matrixb}
\bf D=
    \begin{pmatrix}
      x_1&  &  &   &  \\
      y_2 & x_2 &   &   &\\
      &    \ddots &   \ddots & & \\
       &  & y_{q-1} & x_{q-1} &   \\
      &   &   &   y_q & x_q
    \end{pmatrix}
\end{align}
with $x_i=\sqrt{c_i c_{i+1}'}$ for $1\le i\le q$ and 
$y_i=-\sqrt{s_i s_i'}$ for $2\le i\le q.$ 
Here the non-negative random variables $c_i, i=1, 2, \cdots, q$ and $s_i, c'_i,  s'_i, i=2, 3, \cdots, q$ obeying the distribution and relationships as 
\begin{itemize}
\item[1).] $\{c_1, c_2, \cdots, c_q, c'_2, c'_3, \cdots, c'_{q}\}$ independent;
\item[2).] $c_i\sim {\rm Beta}(\frac12(n-p-i+1), \frac12(p-i+1)), \; 1\le i\le q;$
\item[3).] $c'_i\sim{\rm Beta}(\frac12(n-q-i+2), \frac12 (q-i+1)), \; 2\le i\le q;$
\item[4).] $s_i+c_i=1, \quad s'_i+c'_i=1, \; 2\le i\le q;$
\item[5).] $c'_{n+1}=1.$ 
\end{itemize}
By Lemma 5.1 in \cite{Ma25}, $((1-\lambda_i)^{-1})_{1\le i\le q}$ can be regarded as the eigenvalues of the matrix ${V' V},$ with entires satisfying
 $$
v_{i, j}=\begin{cases} \frac{1}{x_i}, &  1\le i=j\le n; \\
(-1)^{i+j} \frac{\prod_{k=j+1}^i y_k}{\prod_{k=j}^i x_k}, & 1\le j\le i-1\le n-1. \\ 
0, \quad & \text{otherwise}.
\end{cases}
$$
Hence,  
\begin{equation}\label{square}
	v_{i, j}^2=\frac{1}{c_{i+1}'c_{j}}\prod_{l=j+1}^i \frac{s_ls_l'}{c_l \, c_l'}{\bf 1}_{1\le j\le i-1}, \quad \text{and}\quad v_{i, i}^2=\frac{1}{c_i c_{i+1}'}\end{equation} for 
$1\le i\le q.$ 
First, for a random variable $\xi\sim {\rm Beta}(\alpha, \beta),$ one knows 
\begin{equation}\label{betad} 
	\mathbb{E} \frac{1}{\xi}=1+\frac{\beta}{\alpha-1}; \quad \mathbb{E} \frac{1}{\xi^2}=1+\beta(\frac{1}{\alpha-1}+\frac{1}{\alpha-2})+\frac{\beta^2}{(\alpha-1)(\alpha-2)};
\end{equation}
Before we go further, we make a little explanation on what terms we can put away. In fact, since our result stop at $o(\frac{p^2q^2}{n^3})$ and only $q\le p=o(n)$ are in need, we can only ignore terms like $$\frac{pq^2+pq+q^2+q^3}{n^3}$$ but not $\frac{p^2+p^2q}{n^3}$ because $q$ could be bounded and we try  to avoid the appearance of the term $\frac{p^3+p^3q}{n^3}$. Based on this understanding, we do the sequent calculus.  

Utilize repeatedly the decomposition 
$$\frac1{n-p-i}=\frac{1}{n}(1+\frac{p+i}{n-p-i})
$$ 
to get 
\begin{equation}\label{decom} 
	\frac1{n-p-i}=\frac{1}{n-p}+\frac{i}{(n-p)^2}+\frac{i^2}{(n-p)^3}+O(\frac{q^3}{n^4})
\end{equation}
and similarly 
\begin{equation}\label{decom1} 
	\frac1{n-q-i}=\frac{1}{n}+\frac{q+i}{n^2}+\frac{(q+i)^2}{n^3}+O(\frac{q^3}{n^4})
\end{equation}
for $1\le i\le q.$ We will also use frequently the following asymptotic to simplify the expressions 
\begin{equation}\label{cruciala}\frac{1}{(n-p)n}=\frac{1}{n^2}+\frac{p}{n^3}+O(\frac{p^2}{n^4}), \quad  \frac{1}{(n-p)^2}=\frac{1}{n^2}+\frac{2p}{n^3}+O(\frac{p^2}{n^4}) \quad \text{and}\quad \frac{1}{(n-p)^3}=\frac{1}{n^3}+O(\frac{p}{n^4}).\end{equation}

By \eqref{betad} and the independence of $c_i$ and $s_i$, together with  \eqref{decom}, \eqref{decom1} and \eqref{cruciala}, it follows 
 that 
\begin{equation}\label{keyc} 
	\aligned \mathbb{E} \frac{s_i}{c_i}&=\frac{p-i+1}{n-p-i-1}\\
	&=\frac{p-i+1}{n-p}+\frac{(p-i+1) (i+1)}{ (n-p)^2}+\frac{(p-i+1) (i+1)^2}{ (n-p)^3}+O\left(\frac{pq^3}{n^4}\right);\\
	\mathbb{E} \frac{s_i'}{c_i'}&=\frac{q-i+1}{
	n-q-i}\\
	&=\frac{q-i+1}{n}+\frac{(q-i+1)(q+i)}{n^2}+\frac{(q-i)(q+i)^2}{n^3}+O(\frac{q2}{n^3}). \endaligned  \end{equation} 
 
Thus, for 
$1\le j\le i-2\le q-2,$ 
one gets from \eqref{keyc}, independence and the relationships $$\frac 1{c_i}=1+\frac{s_i}{c_i}=O(1), \quad \frac 1{c_i'}=1+\frac{s_i'}{c_i'}=O(1)$$ that  
$$\mathbb{E} v_{i, j}^2=\mathbb{E}\frac{1}{c_{i+1}'c_{j}}\prod_{l=j+1}^i \frac{s_ls_l'}{c_l \, c_l'}=O\left(\frac{pq}{n^2}\right)^{i-j}.$$
Consequently, 
$$\sum_{1\le j\le i-2\le q}\mathbb{E} v_{i, j}^2=O\left(\frac{p^2q^3}{n^4}\right)=o(\frac{p^2q^2}{n^3}).$$ 
Therefore 
$$\sum_{i=1}^q(1-\lambda_i)^{-1}=\mathbb{E}\sum_{1\le i, j\le n} v_{i, j}^2=\mathbb{E}\sum_{i=1}^q v_{i, i}^2+\mathbb{E}\sum_{i=1}^{q-1} v_{i+1, i}^2+o\left(\frac{p^2q^2}{n^3}\right).$$
We first check the sum $\mathbb{E}\sum_{i=1}^{q-1} v_{i+1, i}^2.$ Indeed, by the definition of $v_{i+1, i},$ we see 
$$\mathbb{E} v_{i+1, i}^2=\mathbb{E}\frac{1}{c_{i+2}'}\;\mathbb{E}\frac{1}{c_i}\;\mathbb{E}\frac{s_{i+1}}{c_{i+1}}\;\mathbb{E}\frac{s_{i+1}'}{c_{i+1}'}.$$ Keep in mind that we only pay close attention to the terms with order greater than $O(\frac{p^2q}{n^3})$ and observe that  
$$\mathbb{E}\frac{s_{i+1}'}{c_{i+1}'}\frac{s_{i+1}}{c_{i+1}}=O(\frac{pq}{n^2}) \quad \text{and}\quad \mathbb{E}\frac{1}{c_{i+2}'}\;\mathbb{E}\frac{1}{c_i}=O(1).$$ Applying \eqref{cruciala} and \eqref{keyc} to proper terms while ignoring the terms $o(\frac{p^2 q^2}{n^3})$, we write 
$$\aligned & \quad \mathbb{E}\frac{1}{c_{i+2}'}\;\mathbb{E}\frac{1}{c_i}\;\mathbb{E}\frac{s_{i+1}'}{c_{i+1}'}\mathbb{E}\frac{s_{i+1}}{c_{i+1}}\\
&=(1+\frac{q-i-1}{n}+O(\frac{q^2}{n^2}))\times(1+\frac{p-i+1}{n-p}+O(\frac{p^2}{n^2}))\\
&\quad\times \left(\frac{p-i}{n-p}+\frac{(p-i) i}{ (n-p)^2}+O(\frac{p}{n^2})\right)\times\left(\frac{q-i}{n}+\frac{(q-i)(q+i)}{n^2}+O(\frac{q}{n^2})\right)\\
&=\frac{(q-i)(p-i)}{n(n-p)}(1+\frac{p+q-2i}{n})(1+\frac{q+2i}{n})+o(\frac{p^2q}{n^3})\\
&=\frac{(q-i)(p-i)}{n^2}(1+\frac{2(p+q)}{n})+o(\frac{p^2q}{n^3}).
\endaligned $$ Now 
$$\sum_{i=1}^{q-1}(q-i)(p-i)=\frac16q(q-1)(3p-q-1),$$ one gets 
\begin{equation}\label{vi1}
\aligned
 \sum_{i=1}^{q-1}\mathbb{E} v_{i+1, i}^2&= \frac{pq(q+1)}{2n^2}-\frac{q(q-1)(q+1)}{6n^2}+\frac{(p+q)pq(q-1)}{3n^3}-\frac{(p+q)q^3}{3n^3}+o(\frac{p^2q^2}{n^3}).
	\endaligned
\end{equation}
For $\mathbb{E} v_{i, i}^2=\mathbb{E}\frac{1}{c_{i+1}'}\;\mathbb{E}\frac{1}{c_i},$ we have to write these two expectations until $O(n^{-4})$ to meet the final order $o(\frac{p^2q^2}{n^3})$ for the sum because both $\mathbb{E}\frac{1}{c_{i+1}'}$ and $\mathbb{E}\frac{1}{c_i}$ are of order $O(1).$  Thereby, with the convention $c_{q+1}'=1$, combining alike terms according to the coefficient of $n^{-2}$ and $n^{-3}$ and continuing to ignore $o(\frac{p^2q^2}{n^3}),$ we are able to get  $$
	\aligned \mathbb{E} \sum_{i=1}^{q} v_{i, i}^2&=\sum_{i=1}^{q}\mathbb{E}\frac{1}{c_{i+1}'}\;\mathbb{E}\frac{1}{c_i}\\
	&=\sum_{i=1}^{q}\left(1+\frac{q-i}{n}+\frac{(q-i)(q+i+1)}{n^2}+\frac{(q-i)(q+i)^2}{n^3}\right)\\
	&\quad \times \left(1+\frac{p-i+1}{n-p}+\frac{(p-i+1) (i+1)}{ n^2}+\frac{(p-i) (i+1)(2p+i)}{n^3}\right)+O(\frac{pq}{n^3})
	\\
	&=p+\frac{pq}{n-p}+\frac{q(q-1)(q+1)}{6n^2}+\frac{pq(q+3)}{2n^2}+\frac{1}{n^3}(\frac{1}3(p+q)q^3+p^2q(q+3))+o(\frac{p^2q^2}{n^3}).\endaligned 
$$
Summing these asymptotic expressions together, we are ready to claim  
$$\aligned \mathbb{E}\sum_{i=1}^q \frac1{1-\nu_i}&=\sum_{i=1}^q\mathbb{E} v_{i, i}^2+\sum_{i=1}^{q-1}\mathbb{E} v_{i+1, i}^2+o\left(\frac{p^2q^2}{n^3}\right)\\
&=p+\frac{pq}{n-p}+\frac{pq(q+1)}{n^2}+\frac{1}{n^3}(2p^2q(q+1)+p q^3)+o\left(\frac{p^2q^2}{n^3}\right). \endaligned    
$$ 
As we can see from the proof of Lemma 3.1 in \cite{Ma25}, 
 \begin{equation}\label{sum00}
	 \mathbb{E} \sum_{i=1}^n \frac{1}{(1-\nu_i)^2}=\sum_{i=1}^q \mathbb{E} v_{i, i}^4+2\sum_{i=1}^{q-1}\mathbb{E} (v_{i, i}^2 v_{i+1, i}^2+v_{i+1, i}^2 v_{i+1, i+1}^2)+o(\frac{p^2q^2}{n^3}). \end{equation} 
	 It follows from the definition of $v_{i, j}$ that
	$$
	 \aligned  
	   v_{i, i}^4&=\frac{1}{c_{i}^2}\frac{1}{c_{i+1}'^2}, \quad   v_{i, i}^2 v_{i+1, i}^2=\frac{1}{c_i^2c_{i+2}'}\frac{s_{i+1}}{c_{i+1}}\frac{s_{i+1}'}{c_{i+1}'^2}\quad\text{and} \quad v_{i+1, i+1}^2 v_{i+1, i}^2=\frac{1}{c_ic_{i+2}'^2}\frac{s_{i+1}}{c_{i+1}^2}\frac{s_{i+1}'}{c_{i+1}'} 
	 \endaligned 	
	$$
We prepare two similar expressions via  \eqref{betad} as \eqref{keyc}, which say
\begin{equation}\label{keytm0}
	\aligned  \mathbb{E} \frac{1}{c_i^2}&=1+\frac{(p-i+1)}{n-p}\left(2+\frac{p+i+5}{n-p}\right)+\frac{2p(i+2)(p-i)}{n^3}+O(\frac{pq}{n^3});\\
	 \mathbb{E} \frac{1}{c_i'^2}&=1+\frac{q-i+1}{n}\left(2+\frac{3q+i+3}{n}+\frac{4q^2+4qi}{n^2}\right)+O\left(\frac{q^2}{n^3}\right)\endaligned  \end{equation}	
	for $1\le i\le q.$ 
Thus, it follows from \eqref{keytm0} that 
$$\aligned  \mathbb{E} v_{i, i}^4&=\mathbb{E}\frac{1}{c_{i}^2}\mathbb{E} \frac{1}{c_{i+1}'^2}\\
&=1+\frac{2(p-i+1)}{n-p}\left(2+\frac{p+i+5}{n-p}\right)+\frac{2p(i+2)(p-i)}{n^3}+O(\frac{pq}{n^3})\\
&\quad +\frac{q-i}{n}\left(2+\frac{3q+i+4}{n}+\frac{4q^2+4qi}{n^2}\right)\\
&\quad +\frac{(q-i)(p-i+1)}{n(n-p)}\left(2+\frac{p+i+5}{n-p}\right)\times\left(2+\frac{3q+i+4}{n}\right)\\
&=1+\frac{(p-i+1)}{n-p}\left(2+\frac{p+i+5}{n-p}+\frac{2p(i+2)}{(n-p)^2}\right)+\frac{2(q-i)}{n}+\frac{(q-i)(4p+3q+8-3i)}{n^2}\\
&\quad+\frac{2(q-i)(2q^2+3pq+3p^2-(p+q)i-2i^2)}{n^3}+O(\frac{pq}{n^3}).\endaligned $$ 
Observing $$\aligned\sum_{i=1}^q\frac{2p(p-i+1)(i+2)}{(n-p)^3}&=\frac{2p}{n^3}\sum_{i=1}^q(p-i+1)(i+2)+O(\frac{p^3q^2}{n^4})\\
&=\frac{p^2q(q+5)}{n^3}-\frac{2pq^3}{3n^3}+o(\frac{p^2q^2}{n^3})\endaligned $$ 
and $$\sum_{i=1}^q(\frac{2(p-i+1)}{n-p}+\frac{2(q-i)}{n})=\frac{2pq}{n-p}-\frac{pq(q-1)}{n^2}-\frac{p^2q(q-1)}{n^3}+o(\frac{p^2q^2}{n^3}),$$ then with some basic calculus, one gets 
\begin{equation}\label{4di}\aligned  &\quad\sum_{i=1}^q\mathbb{E} v_{i, i}^4\\
&=q+\frac{2pq}{n-p}+\frac{q(q-1)(p+q+\frac72)}{n^2}+\frac{q}{(n-p)^2}(p^2+6p-\frac52q-\frac13 q^2+\frac{17}6)\\
&\quad+\frac{1}{n^3}(3p^2q(q+1)+2pq^3+\frac{4}{3}q^4)+o(\frac{p^2q^2}{n^3}).\endaligned \end{equation}
Meanwhile, $$\sum_{i=1}^{q-1}\mathbb{E} v_{i, i}^2 v_{i+1, i}^2=\sum_{i=1}^{q-1}\mathbb{E}\frac{1}{c_i^2c_{i+2}'}\frac{s_{i+1}}{c_{i+1}}\frac{s_{i+1}'}{c_{i+1}'^2}.$$ 
It follows from \eqref{keyc} and \eqref{keytm0} that 
$$\aligned \mathbb{E}\frac{s_{i+1}'}{c_{i+1}'^2}&=\mathbb{E}(\frac{1}{c_{i+1}'^2}-\frac1{c_{i+1}'})=\frac{q-i}{n}+\frac{(q-i)(2q+3)}{n^2}
\endaligned $$
and similarly, as for $\sum_{i=1}^{q-1}\mathbb{E} v_{i+1, i}^2,$ since $\mathbb{E}\frac{s_{i+1}}{c_{i+1}}\frac{s_{i+1}'}{c_{i+1}'^2}=O(\frac{pq}{n^2}),$ to keep the terms with order not less than $\frac{p^2q^2}{n^3},$ we can simplify the expression of $\mathbb{E} v_{i, i}^2 v_{i+1, i}^2$ as 
$$\aligned &\quad \mathbb{E} v_{i, i}^2 v_{i+1, i}^2\\
&=\mathbb{E}\frac{1}{c_i^2c_{i+2}'}\frac{s_{i+1}}{c_{i+1}}\frac{s_{i+1}'}{c_{i+1}'^2}\\
&=(1+\frac{2(p-i+1)}{n-p})\times(1+\frac{q-i-1}{n})\times \left(\frac{p-i}{n-p}+\frac{(p-i) (i+2)}{ (n-p)^2}\right)\\
&\quad\times\left(\frac{q-i}{n}+\frac{(q-i)(2q+3)}{n^2}\right)+O(\frac{pq}{n^3})\\
&=O(\frac{pq}{n^3})+\frac{(q-i)(p-i)}{n(n-p)}(1+\frac{2p+q-3i}{n})\times\bigg(1+
\frac{2q+i}{n}\bigg)\\
&=O(\frac{pq}{n^3})+\frac{(q-i)(p-i)}{n^2}+\frac{(q-i)(p-i)(3p+3q-2i)}{n^3},
\endaligned $$ 
where for the last equality we use the fact 
$$\frac{(q-i)(p-i)}{n(n-p)}=\frac{(q-i)(p-i)}{n^2}+\frac{p(q-i)(p-i)}{n^3}+O(\frac{p^3q}{n^4}).$$
Thus, 
\begin{equation}\label{2cross2} \aligned \sum_{i=1}^{q-1}\mathbb{E} v_{i, i}^2 v_{i+1, i}^2&=o(\frac{p^2q^2}{n^3})+\sum_{i=1}^{q-1}\frac{(q-i)(p-i)}{n^2}(1+\frac{3p+3q-2i}{n})\\
&=\frac{q(q-1)(3p-q-1)}{6n^2}+\frac{1}{n^3}(\frac{3}{2}p^2q(q-1)+\frac{2}{3}pq^3-\frac13q^4)+o(\frac{p^2q^2}{n^3}).
\endaligned 
\end{equation}
Since $\mathbb{E} v_{i+1, i+1}^2 v_{i+1, i}^2=\mathbb{E}\frac{1}{c_ic_{i+2}'^2}\frac{s_{i+1}}{c_{i+1}^2}\frac{s_{i+1}'}{c_{i+1}'},$ the same analysis  brings us 
$$\aligned &\quad \mathbb{E} v_{i+1, i+1}^2 v_{i+1, i}^2
=\frac{(q-i)(p-i)}{n^2}(1+\frac{3p+3q-2i}{n})+O(\frac{pq}{n^3})
\endaligned $$ 
  and thereby
\begin{equation}\label{2cross21}\sum_{i=1}^{q-1}\mathbb{E} v_{i+1, i}^2 v_{i+1, i+1}^2=\frac{q(q-1)(3p-q-1)}{6n^2}+\frac{1}{n^3}(\frac{3}{2}p^2q(q-1)+\frac{2}{3}pq^3-\frac13q^4)+o(\frac{p^2q^2}{n^3}).\end{equation} 
Putting \eqref{4di}, \eqref{2cross2} and \eqref{2cross21} back into \eqref{sum00}, we know that 
$$
	\aligned \mathbb{E} \sum_{i=1}^n \frac{1}{(1-\nu_i)^2}
	&=q+\frac{2pq}{n-p}+\frac{q}{(n-p)^2}(p^2+6p-\frac52q-\frac13 q^2+\frac{17}6)+o\left(\frac{p^2 q^2}{n^3}\right)\\
	&\quad+\frac{q(q-1)}{n^2}(3p+\frac13q+\frac{17}{6})+\frac1{n^3}(3p^2q(3q-1)+\frac{14}{3}pq^3).\endaligned 
$$
The expression \eqref{cruciala} helps us again to get 
$$
	\aligned \frac{q}{(n-p)^2}(p^2+6p-\frac52q-\frac13 q^2+\frac{17}6)=\frac{p^2 q}{(n-p)^2}+\frac{6pq-\frac52q^2-\frac13 q^3+\frac{17}6q}{n^2}+\frac{12p^2 q-\frac23pq^3}{n^3}+o(\frac{p^2q^2}{n^3}).\endaligned 
$$
and then combining alike terms we see 
 $$
	\aligned \mathbb{E} \sum_{i=1}^n \frac{1}{(1-\nu_i)^2}
	&=q+\frac{2pq}{n-p}+\frac{p^2q}{(n-p)^2}+\frac{3pq(q+1)}{n^2}+\frac{9p^2q(q+1)+4pq^3}{n^3}+o(\frac{p^2q^2}{n^3}).\endaligned 
$$ 

The proof is completed now. 
\end{proof}

\subsection{The final statement} 

Having Lemma \ref{expressofI} and \ref{inversemj} at hands, and also it was proved in \cite{JM19} that 
$$\mathbb{E} \sum_{k=1}^q \lambda_k=\frac{pq}{n},$$ we are able to present the proof of Theorem \ref{main1}.  

In fact, Lemma \ref{expressofI} and Lemma \ref{inversemj} tell us  
$$\aligned  &\quad I(\mathcal{L}(\sqrt{n}{Z}_n)|\mathcal{L}({G}_n))\\
 &=\frac{n}4\sum_{k=1}^q\mathbb{E}\lambda_k-c_n(\sum_{k=1}^q \mathbb{E}(1-\lambda_k)^{-1}-q)+\frac{c_n^2}{n}\sum_{k=1}^q\mathbb{E}((1-\lambda_k)^{-2}-(1-\lambda_k)^{-1})\\
 &=\frac{pq}{4}-c_n(\frac{pq}{n-p}+\frac{pq(q+1)}{n^2}+\frac{2p^2q(q+1)+pq^3}{n^3})+o(\frac{p^2q^2}{n^2})\\
 &\quad+\frac{c_n^2}{n}(\frac{pq}{n-p}+\frac{p^2q}{(n-p)^2}+\frac{2pq(q+1)}{n^2}+\frac{7p^2q(q+1)+3pq^3}{n^3}).
\endaligned $$
Remembering $c_n=\frac{n-p-q-1}{2},$  we see clearly that 
$$\aligned & \quad c_n\left(\frac{p q}{n-p} + \frac{p q(q+1)}{n^2}+\frac{2p^2q(q+1)+pq^3}{n^3}\right)\\
&=\frac{pq}{2}-\frac{pq(q+1)}{2(n-p)}+\frac{pq(q+1)}{2n}-\frac{pq(q+1)(p+q+1)}{2n^2}+\frac{2p^2q(q+1)+pq^3}{2n^2}\\
&=\frac{pq}{2}+o(\frac{p^2q^2}{n^2})\endaligned $$
and 
$$\aligned &\quad \frac{c_n^2}{n}\left(\frac{p q}{n-p}+\frac{2pq(q+1)}{n^2} + \frac{p^2q}{(n-p)^2}\right)\\
&=\frac{pq}{4}(1-\frac{p+q+1}{n})(1-\frac{q+1}{n-p})+\frac{pq(q+1)}{2n}(1-\frac{p+q+1}{n})^2+\frac{p^2q}{4n}(1-\frac{q+1}{n-p})^2\\
&=\frac{pq}{4}-\frac{pq(p+2(q+1))}{4n}+\frac{pq(q+1)}{2n}+\frac{p^2q}{4n}+\frac{pq^3}{4n^2}+o(\frac{p^2q^2}{n^2})\\
&\quad-\frac{pq(q+1)(p+q+1)}{n^2}-\frac{p^2q(q+1)}{2n^2}\\
&=\frac{pq}{4}-\frac{6p^2q(q+1)+3pq^3}{4n^2}+o(\frac{p^2q^2}{n^2}).\endaligned$$

Thereby, it holds 
$$\aligned   \quad I(\mathcal{L}(\sqrt{n}{Z}_n)|\mathcal{L}({G}_n))&=-\frac{6p^2q(q+1)+3pq^3}{4n^2}+\frac{7p^2q(q+1)+3pq^3}{4n^2}+o(\frac{p^2q^2}{n^2})\\
&=\frac{p^2q(q+1)}{4n^2}+o(\frac{p^2q^2}{n^2}).
\endaligned $$

The proof of Theorem \ref{main1} is finally completed.

\begin{rmk} 
Let $(\lambda_k)_{1\le k\le q}$ and $(\nu_k)_{1\le k\le q}$ be the eigenvalues of $Z_n  Z_n'$ and $G_n  G_n',$ respectively. Set $\boldsymbol{\lambda}=(\lambda_1, \cdots, \lambda_q)$ and $\boldsymbol{\nu }=(\nu_1, \cdots, \nu_q).$ 
Since $\boldsymbol{\lambda}$ is identified as a Jacobi ensemble, $\boldsymbol{\nu}$ could  also be regarded as a Laguerre ensemble as well. Thus, according to the Fisher information between Laguerre and Jacobi ensembles offered in \cite{Ma25}, we see  
$$I(\mathcal{L}(\boldsymbol{\lambda})|\mathcal{L}(\boldsymbol{\nu}))=\frac{pq^2+q^3}{4n^2}+o(\frac{pq^2}{n^2})$$ when $p=o(n).$ The Fisher information among the distributions of corresponding eigenvalues  $\boldsymbol{\lambda}$ and $\boldsymbol{\nu}$ behaves in great difference from that for joint distributions of entries of matrices given in Theorem \ref{main1}. 
\end{rmk}

\begin{rmk} 
We can have fluctuations and large deviations  for the empirical of $(\lambda_k)_{1\le k\le q}$ as well as those for the extremal statistics $\lambda_{(q)}:=\max_{1\le i\le q}\lambda_i$ and $\lambda_{(1)}:=\min_{1\le i\le q}\lambda_i$ by the limit theorems obtained for general $\beta$-ensembles (see \cite{LM23}, \cite{Ma23}, \cite{Ma25} and \cite{MW25}). To avoid losing sight of the main point,  we only state the stong law of large number and the fluctuations of extremal statistics.  Suppose  
$$({\bf A}):  \quad \quad \quad \quad \quad \quad \quad \quad q\gg 1 \quad \quad \lim_{n\to\infty}\frac{q}{p}=\gamma\quad \quad \text{and} \quad \quad \lim_{n\to\infty}\frac{pq}{n}=0. $$ 
On the one hand, one gets the strong law of large number as 
	$$\frac{n\lambda_{(q)}-p}{\sqrt{pq}}\longrightarrow 2+\sqrt{\gamma}  \quad \text{and} \quad \frac{n\lambda_{(1)}-p}{\sqrt{pq}}\longrightarrow 2-\sqrt{\gamma}$$ almost surely as $n$ tends to the infinity. 
On the other hand, we gets the fluctuations as follows.
\begin{itemize} 
\item[(1).] If $\gamma\in [0, 1],$ then 
$$\frac{(pq)^{1/6}}{(\sqrt{p}+\sqrt{q})^{4/3}}\left(n\,\lambda_{(q)}-(\sqrt{p}+\sqrt{q})^2\right) $$
converges weakly to $F_1$ 
as $n\to\infty.$ 
\item[(2).] If $\gamma\in [0, 1),$ then $$\frac{(pq)^{1/6}}{(\sqrt{p}-\sqrt{q})^{4/3}}\left((\sqrt{p}-\sqrt{q})^2-n\,\lambda_{(1)}\right) $$
converges weakly to $F_1$ 
as $n\to\infty.$ 
\item[(3).]  If $p=q,$ then 
$\frac1q \sqrt{\frac{\lambda_{(q)}}{\lambda_{(1)}}}$ converges weakly to the probability with density function 
$h(x)=8x^{-3} e^{-4/x^2}$ for $x>0.$ 
\end{itemize}

Here, $F_1$ is the Tracy-Widom law, originally appeared as the weak limit for extremal statistics for Gaussian orthogonal ensemble (see \cite{TW1}),  whose distribution function is given by 
$$
F_{1}(x)=\exp \left(-\frac{1}{2} \int_{x}^{\infty} q(y) d y\right)\exp \left(-\frac{1}{2}\int_{x}^{\infty}(y-x) q^{2}(y) d y\right)
$$
for all $x \in \mathbb{R}$ 
with  $q$ is the unique solution to the Painlev\'e II differential equation
$$
q^{\prime \prime}(x)=x q(x)+2 q^{3}(x)
$$
satisfying  the boundary condition $q(x) \sim A i(x)$ as $x \rightarrow \infty$ and $A i(x)$ is the Airy function.
\end{rmk}



\begin{thebibliography}{SOSL90}
\bibitem{Ahlfors}
Ahlfors, L. V. (1979). Complex Analysis. {\it McGraw-Hill,
 Inc.}, 3rd ed.

\bibitem{intr to mat}
Anderson, G. W., Guionnet, A. and Zeitouni, O.(2010) An introduction to random matrices. Cambridge university press. 
%
%
 \bibitem{Bai_Jack}
 Bai, Z. and Silverstein, J. (2010). Spectral Analysis of Large Dimensional Random Matrices. Springer, 2nd ed.

\bibitem{Casella}
Casella, G. and Berger, R. (2008). Statistical Inference.  Cengage Learning, 2nd ed.

\bibitem{Chow}
Chow, Y.  and Teicher, H. (1988). Probability Theory, Independence, Interchangeability,
Martingales. Springer, 2nd ed.

\bibitem{Collins}
Collins, B. (2003). Int\'{e}grales Matricielles et Probabiliti\'{e}s Non-commutatives.
{\it Th\`{e}se de Doctorat of Universit\'{e} Paris 6}.

\bibitem{Collins-Sni} 
Collins, B. and \'{S}niady, P. (2006).  Integration with respect to the Haar measure on unitary, orthogonal and symplectic group. {\it Commun. Math. Phys.} {\bf 264}, 773-795. 

\bibitem{Csiszar}
Csisz$\acute{a}$r, I. (1967). Information-type measures of difference of probability distributions and indirect observations.
{\it Studia Scient. Math. Hungarica} 2, 299-318.

\bibitem{persi03}
Diaconis, P. (2003).
Patterns in eigenvalues: The 70th Josiah Willard Gibbs Lecture.
{\it Bull. Amer. Math. Society.}

\bibitem{DF87}
Diaconis, P. and Freedman, D. (1987).
A dozen de Finetti style results in search of a theory.
{\it Ann. Inst. Henri Poincar\'{e}} {\bf 23}, 397-423.

\bibitem{DLE}
Diaconis, P., Eaton, M. and Lauritzen, L. (1992).
Finite de Finetti theorems in linear models and multivariate analysis.
{\it Scand. J. Statist.} {\bf 19}(4), 289-315.


\bibitem{ME2}
Eaton, M. (1989).
Group-Invariance Applications in Statistics. {\it Regional Conf. Series Probab. 
Statist.} {\bf 1}, IMS, Hayward, California.

\bibitem{ES08} Edelman A. and Sutton, B. (2008) The beta-Jacobi matrix model, the CS decomposition, and generalized singular value problems. {\it Found. Comput. Math.,} {\bf 8}(2): 259-285.

\bibitem{EM24} Erdos, L. and McKenna, B.(2024) Extremal statistics of quadratic forms of GOE/GUE eigenvectors. {\it 
Ann. Appl. Probab.} {\bf  34}(1B), 1623-1662.
\bibitem{Forrester06} Forrester, P. J. (2006) Quantum conductance problems and the Jacobi ensemble. {\it J. Phys. A: Math. Gen.,} {\bf 39}: 6861-6870. 

\bibitem{Gamelin}
Gamelin, T. W. (2001). Complex Analysis. Springer, 1st ed.

\bibitem{matrix}
Gupta, A. K. and Nagar, D. K.(2018) Matrix variate distributions. Chapman and Hall/CRC.

\bibitem{Jiang06}
Jiang, T. (2006).
How many entries of a typical orthogonal matrix can be approximated by independent  normals?
{\it Ann. Probab.,} {\bf 34}(4), 1497-1529.

\bibitem{Jiang09}
Jiang, T. (2009).  Approximation of Haar distributed matrices and limiting distributions of
eigenvalues of Jacobi ensembles.  {\it Probab. Theor. Relat. Fields} {\bf 144}, 221-246.

\bibitem{Jiang2009}
Jiang, T. (2009). A variance formula related to quantum conductance. {\it Physics Letters A} {\bf 373}, 2117-2121.


\bibitem{JM19} Jiang, T. and Y.-T. Ma. (2019) Distances between Random Orthogonal Matrices and Independent Normals. {\it Trans. Amer. Math. Soc.,} {\bf 372}(3): 121-136. 
\bibitem{Jonsson}
Jonsson, D. (1982).
Some limit theorems for the eigenvalues of a sample covariance matrix.
{\it J. Multi. Anal.} {\bf 12}, 1-38.

\bibitem{Kraft} Kraft, C. (1955). Some conditions for consistency and uniform consistency
of Statistical procedures. {\it Univ. California Pub. Statist.} {\bf 2}, 125-142.

\bibitem{LM23} Lei, L. and Ma, Y.-T. (2023).  Large deviations for top eigenvalues of  $\beta$-Jacobi ensembles at scaling temperatures. {\it Acta Math. Sci. Ser. B (Engl. Ed.)} {\bf  43}(4), 1767-1780. 
\bibitem{Ma23}  Ma, Y.-T.(2023)  Limit theorems and large deviations for  $\beta$-Jacobi ensembles at scaling temperatures. {\it Acta Math. Sin. (Engl. Ser.)} {\bf  39}(10), 2054-2074.
\bibitem{Ma25} Ma, Y.-T. (2025) Fisher information among $\beta$-ensembles. {\it Acta Math. Sci. Ser. B(Engl. Ed.)}, {\bf 45}(2), 493-513. 
\bibitem{MW25} Ma, Y.-T. and Wang, S. Asymptotically efficient estimators for tail probabilities of extremals of $\beta$-Jacobi ensembles. arXiv:2409.16868. 
\bibitem{max75}
Maxwell, J. C. (1875).
Theory of Heat.
{\it Longmans, London, } 4th ed.

\bibitem{max78}
Maxwell, J. C. (1878).
On Boltzmann's theorem on the average distribution of energy in a system of material points. {\it Cambridge Phil. Soc. Trans.} 12, 547.

\bibitem{poicare}
Poincar$\acute{\mbox{e}},$ H. (1912).
Calcul des probabiliti$\acute{\mbox{e}}$s.
 Gauthier-Villars, Paris.
\bibitem{polynomials}
Prasolov, V. V. (2009) Polynomials. Springer Science $\&$ Business Media.
\bibitem{Shiryayev}
Shiryaev, A. N. (1995). Probability (Graduate Texts in Mathematics). Springer, 2nd ed.

\bibitem{stam}
Stam, A. J. (1982). Limit theorems for uniform distributions on high dimensional Euclidean spaces.
{\it J. Appl. Prob.} 19, 221-228.

\bibitem{St20} Stewart, K. (2020) Total variation approximation of random orthogonal matrices by Gaussian matrices. 
{\it J. Theoret. Probab.,} {\bf  33}(2), 1111-1143.


\bibitem{TW1} Tracy, C. A. and Widom, H.(1996). On orthogonal and symplectic matrix ensembles. {\it Commun. Math. Phys.}, {\bf 177}, 727-754.
\end{thebibliography}
\end{document}